\pgfplotsset{width=10cm,compat=1.9}
\newtheorem{theorem}{Theorem}[section]
\newtheorem{lemma}[theorem]{Lemma}
\newtheorem{proposition}[theorem]{Proposition}
\theoremstyle{definition}
\newtheorem{remark}[theorem]{Remark}
\newtheorem{example}[theorem]{Example}
\newcommand{\N}{\mathbb{N}}
\newcommand{\R}{\mathbb{R}}
\newcommand{\Fh}{\mathcal{F}_h}
\newcommand{\cci}{C_c^\infty}
\newcommand{\grad}{\nabla}
\newcommand{\xib}{\bar{\xi}}
\newcommand{\xb}{\bar{x}}
\newcommand{\yb}{\bar{y}}
\newcommand{\supp}{\operatorname{supp}}
\newcommand{\bm}{\boldsymbol{m}}
\title{$L^p$ estimates for joint quasimodes of two pseudodifferential operators whose characteristic sets have $k$-th Order Contact}
\author{Madelyne M. Brown and Melissa Tacy}
\address{Department of Mathematics, University of Auckland, Auckland, New Zealand}
\email{madelyne.brown@auckland.ac.nz}
\address{Department of Mathematics, University of Auckland, Auckland, New Zealand}
\email{melissa.tacy@auckland.ac.nz}
\date{April 2025}
\begin{document}
\begin{abstract}
   On a smooth, compact, $n$-dimensional Riemannian manifold, we consider functions $u_h$ that are joint quasimodes of two semiclassical pseudodifferential operators $p_1(x,hD)$ and $p_2(x,hD)$. We develop $L^p$ estimates for $u_h$ when the characteristic sets of $p_1$ and $p_2$ meet with $k$-th order contact. This paper is the natural extension of the two-dimensional result of \cite{TacyContact} to $n$ dimensions.
\end{abstract}

\maketitle

\section{Introduction and Main Results}\label{sec:intro}
Let $(M,g)$ be a smooth, compact, $n$-dimensional Riemannian manifold without boundary and let $u_h$ denote the Laplace eigenfunctions on $M$, which satisfy
\begin{equation}\label{eqn:efcnh}
    (-h^2\Delta_g- 1) u_{h}=0. %\qquad \text{and} \quad \|u_{\lambda_j}\|_{L^2(M)}=1
\end{equation}
Understanding the growth and concentration of $\{u_{h}\}$ is an important and well-studied problem in mathematical physics, as the eigenfunctions can be used to describe many physical phenomena. For example,  $|u_{h}(x)|^2$ gives the probability density function of finding a free quantum particle of energy $h^{-1}$ at $x\in M$. Unfortunately, explicitly computing the eigenfunctions can only be done when $M$ is highly symmetric (e.g.\ $M$ is a sphere or torus). 

In this note, we study the behaviour of Laplace eigenfunctions by comparing the $L^p$ norm of $u_{h}$ to its $L^2$ norm. Particularly, we study the $L^p$ norms of functions $u_h$ that approximately solve (\ref{eqn:efcnh}) and additionally approximately satisfy a second pseudodifferential equation. 

\subsection{A brief overview of $L^p$ estimates}\label{sec:hist}
Since the late 1900s, $L^p$ estimates have been used to discern the concentration of eigenfunctions. In 1988, Sogge \cite{SoggeLp} showed
\begin{equation}\label{eqn:sogge}
\|u_{h}\|_{L^p(M)}\lesssim h^{-\delta(n,p)}\|u_{h}\|_{L^2(M)} \quad \text{where} \quad \delta(n,p)=\begin{cases} \frac{n-1}{4} -\frac{n-1}{2p}& 2 \leq p \leq \frac{2(n+1)}{n-1} \\ \frac{n-1}{2}-\frac{n}{p} & \frac{2(n+1)}{n-1} \leq p \leq \infty \end{cases}.
\end{equation}
The bound in (\ref{eqn:sogge}) is sharp, as there are examples on the sphere, namely the highest weight spherical harmonics and the zonal harmonics, which saturate the bound. We note that the estimate in (\ref{eqn:sogge}) is not simply due to interpolating between the $L^2$ and $L^\infty$ estimates. Indeed, interpolating between $L^2$ and the $L^\infty$ estimate $|u_{h}|\lesssim h^{-\frac{n-1}{2}}$ (which is a consequence of the Pointwise Weyl Law of Avakumovi\'{c}, Levitan, and H\"{o}rmander \cite{Avak,Lev,Hor}) gives
\[
\|u_{h}\|_{L^p(M)}\lesssim h^{-\frac{n-1}{2}+\frac{n-1}{p}} \|u_{h}\|_{L^2(M)},
\]
which is not sharp for $p\in(2,\infty)$. In \cite{KTZ}, the authors generalise (\ref{eqn:sogge}) to solutions of Schr\"{o}dinger operators and non-degenerate semiclassical operators. 

$L^p$ estimates have also been studied for restricted eigenfunctions. For example, in \cite{BGT} and \cite{Hu}, the authors show for $H$ a submanifold of dimension $d$ that
\begin{equation}\label{eqn:lpsub}
\|u_h\|_{L^p(H)}\lesssim h^{-\delta(n,d,p)}\|u_h\|_{L^2(M)} \quad \text{where}, \quad  \delta(n,d,p)=\begin{cases} \frac{n-1}{2}-\frac{d}{p} & \text{if} \begin{cases} 1 \leq d \leq n-3 \\ d=n-2 \text{ and } 2<p\leq \infty \\ d=n-1 \text{ and } p_0 < p \leq \infty \end{cases}  \\ \frac{n-1}{4}-\frac{d-1}{2p} & \text{if } d=n-1 \text{ and } 2\leq p < p_0 \end{cases},
\end{equation}
where $p_0=\frac{2n}{n-1}$. In \cite{TacySub}, the second-named author extends (\ref{eqn:lpsub}) to solutions of semiclassical pseudodifferential operators. In the case where $H$ is a smooth, curved hypersurface ($d=n-1$), Hassell and Tacy \cite{HassTacy} obtain an improvement upon (\ref{eqn:lpsub}).

Following Sogge's initial $L^{p}$ estimates \cite{SoggeLp}, there has been much interest around the question, ``under what conditions can \eqref{eqn:sogge} be improved?''. In \cite{HasselTacylog} Hassell and Tacy show logarithmic improvements for high $p$ ($p>\frac{2(n+1)}{n-1}$) on manifolds with non-positive curvature. In \cite{CanzaniGalkowski} Canzani and Galkowski use their geodesic beam technique to characterise some dynamical conditions that ensure improvements over \eqref{eqn:sogge}.   In the low $p$ case ($p<\frac{2(n+1)}{n-1}$), Sogge and Zelditch  \cite{SoggeZelditch2D} then Blair and Sogge \cite{BlairSogge1,BlairSogge2,BlairSogge3} obtained logarithmic improvements under the non-positive curvature assumption. Addressing the critical $p$ value, $p=\frac{2(n+1)}{n-1}$ is, from a technical viewpoint, more difficult. In a series of papers; Sogge \cite{Soggecritp}, Blair and Sogge \cite{BlairSoggecritp}, and Blair, Huang and Sogge \cite{BlairHuangSoggecritp} obtained logarithmic improvements at the critical $p$ with the final set of estimates appearing in \cite{Soggesurvey} (along with a detailed discussion about the curvature conditions and the importance and difficulty of the critical value of $p$). 

In this note, we consider improvement to \eqref{eqn:sogge} when $u_h$ is an eigenfunction that additionally solves a secondary equation. That is 
\begin{equation}\label{eqn:2ops}
0=(-h^2\Delta-1)u_h=:p_1(x,hD)u_h\quad \text{and} \quad 0=p_2(x,hD)u_h,
\end{equation}
where $p_2$ is some semiclassical pseudodifferential operator. In \cite{TacyTrans}, the author obtains $L^p$ estimates for eigenfunctions that additionally satisfy $r-1$ semiclassical equations, $p_2(x,hD)u_h=\dots p_r(x,hD)u_h=0$, under the assumption that the characteristic sets $\{p_j(x,\xi)=0\}$ meet transversely. Particularly, in this case, Tacy shows
\begin{equation}\label{eqn:lpr}
\|u_h\|_{L^p(M)}\lesssim h^{-\delta(n,p,r)}\|u\|_{L^2(M)} \quad \text{where} \quad \delta(n,p,r)=\begin{cases} \frac{n-r}{4}-\frac{n-r}{2p} & 2\leq p \leq \frac{2(n-r+2)}{n-r} \\ \frac{n-r}{2}-\frac{n-r+1}{p} &\frac{2(n-r+2)}{n-r} \leq p \leq \infty \end{cases}. 
\end{equation}
%To obtain (\ref{eqn:lpr}), Tacy uses the $r$-equations to inductively ``factor-out" $r-1$ variables from the symbols $p_i(x,\xi)$ of the semiclassical operators. Then she treats $u_h$ as an $n-(r-1)$ dimensional quasimode, and applies \cite[Theorem 5]{KTZ}.
This article is inspired by the work \cite{TacyContact} of the second-named author, who considers eigenfunctions satisfying one additional semiclassical equation (\ref{eqn:2ops}) on a compact Riemannian surface $(n=2)$, where the characteristic curves,
\[
\{\xi\in T^*_{x_0}(M):|\xi_{g(x_0)}^2-1=0\} \quad \text{and} \quad \{\xi\in T^*_{x_0}(M):p_2(x_0,\xi)=0\},
\]
meet at a single point $\xi=\xi_0$ and have $k$-th order contact for $k\geq 1$. Here, Tacy uses the natural notion for the contact of curves determined by the order in which the derivatives of these characteristic curves agree. Since $k\geq 1$, the characteristic sets are, at a minimum, tangential at the point of contact. This note aims to extend Tacy's work into $n$-dimensions. Instead of using the contact of characteristic curves, we must consider the structure of the contact between the $(n-1)$-dimensional characteristic sets. 

Many of the above results also hold for a more general class of functions. That is, we do not need to require that $u_h$ solves (\ref{eqn:efcnh}) exactly, instead it may be an ``approximate solution" or \textit{quasimode of order} $h$ satisfying
\[
\|(-h^2\Delta-1)u_h\|_{L^2}\lesssim h \|u_h\|_{L^2}.
\]
Furthermore, we may often replace $(-h^2\Delta-1)$ with more general ``Laplace-like" operators $P_1:=p_1(x,hD)$. For example, we could take $P_1$ that has a characteristic set,
\[
\{\xi:p_1(x,\xi)=0\},
\]
which is curved (i.e.\ has a non-degenerate second fundamental form).

\subsection{Statement of results}\label{sec:sor} Let $M$ be a smooth, compact, boundaryless, $n$-dimensional Riemannian manifold. Let $P_1:=p_1(x,hD)$ and $P_2:=p_2(x,hD)$ be two semiclassical pseudodifferential operators mapping $L^2(M)\to L^2(M)$. Moreover, suppose $P_1$ has curved characteristic set. Throughout this article, we use the left quantisation, that is
\[
p(x,hD)u(x)=\frac{1}{(2\pi h)^n}\int e^{\frac{i}{h}\langle x-y,\xi\rangle}p(x,\xi)u(y) dy \, d\xi.
\]
Let $u_h$ be an approximate solution of order $h$ of both $P_1$ and $P_2$ so
\[
p_1(x,hD)u_h=O_{L^2}(h\|u_h\|_{L^2}) \quad \text{and} \quad p_2(x,hD)u_h=O_{L^2}(h\|u_h\|_{L^2}).
\]
Furthermore, we assume $u_h$ is a \textit{strong joint quasimode of order $h$} for $P_1$ and $P_2$. That is, for all $M_1,M_2\in\N$ we have
\[
\|P_1^{M_1} P_2^{M_2} u_h\|_{L^2(M)}\lesssim h^{M_1+M_2}\|u_h\|_{L^2(M)}.
\]
Moreover, we also assume $u_h$ is \textit{compactly microlocalised}, so there exists $\chi\in \cci(T^* M)$ such that
\[(1-\chi(x,hD))u_h=O(h^\infty).\]

In this article, we consider the case where the characteristic sets,
\[
\{\xi\in T^*_{x_0}(M):p_1(x_0,\xi)=0\} \quad \text{and} \quad \{\xi\in T^*_{x_0}(M):p_2(x_0,\xi)=0\},
\]
 have higher-order contact, so, they are at a minimum tangential at the point of contact. In contrast to the $n=2$ case studied in \cite{TacyContact} where the characteristic sets are curves, the characteristic sets in the $n$-dimensional case are smooth hypersurfaces. The standard way to define the order of contact between curves is via the derivative. In higher dimensions, no canonical definition of the order of contact exists. In this note, we will define the contact of the characteristic sets by considering the order of contact between all pairs of geodesics on the hypersurfaces passing through the contact point. However, it is not necessarily the case that all pairs of geodesics have the same order of contact. For example, the surfaces in $\R^3$ meeting at $0$ defined by
\[
S_1=\{(x,y,z):z=0\} \quad \text{and} \quad S_2=\{(x,y,z):z=x^2+y^6\}
\]
have first-order contact along all lines $\{y=mx\}$ for any $m\in \R$. However, along the line $\{x=0\}$, the contact is fifth order. In this article, we assume the order of contact is the same in each direction.

\begin{theorem}\label{thrm:linfcontact} Suppose $u_h$ is a compactly microlocalised, strong joint quasimode of order $h$ for $P_1=p_1(x,hD)$ and $P_2=p_2(x,hD)$. Let $k\geq 1$ and odd. Furthermore assume $p_1(x,\xi)$ and $p_2(x,\xi)$ satisfy
\begin{enumerate}
    \item For each $x_0\in M$ and $j=1,2$ the set $\{\xi:p_j(x_0,\xi)=0 \}=:H_{x_0}^j$ is a smooth hypersurface in $T_{x_0}^* M$.
    \item For each $x_0$, the hypersurfaces $H_{x_0}^1,H_{x_0}^2$ meet at a single point $\xi_0$. Moreover, assume that $H_{x_0}^1,H_{x_0}^2$ are tangential at $\xi_0$.
    \item Let $\gamma^1_{\xi_0,v}(t)\in H_{x_0}^1$ and $\gamma^2_{\xi_0,v}(t)\in H_{x_0}^2$ denote a pair of geodesics in $H_{x_0}^1,H_{x_0}^2$ respectively passing through $\xi_0$ at direction $v$. Hence, $\gamma^j_{\xi_0,v}(0)=\xi_0$ and $\dot{\gamma}^j_{\xi_0,v}(0)=v$. Moreover, suppose  $\gamma^1_{\xi_0,v}(t)$ and $\gamma^2_{\xi_0,v}(t)$ meet with $k$-th order contact for all directions $v$, so,
    \[
    \frac{d^j}{dt^j}\gamma^1_{\xi_0,v}(0)=\frac{d^j}{dt^j}\gamma^2_{\xi_0,v}(0) \qquad \text{for } 0 \leq j \leq k, \qquad \text{and}  \quad  \frac{d^{k+1}}{dt^{k+1}}\gamma^1_{\xi_0,v}(0) \not=\frac{d^{k+1}}{dt^{k+1}}\gamma^2_{\xi_0,v}(0).
    \]
    \item For all $x_0$, $\{\xi:p_1(x_0,\xi)\}$ has non-degenerate second fundamental form.
\end{enumerate}
Then
\begin{equation}\label{eqn:linf}
\|u_h\|_{L^p(M)}\lesssim h^{-\delta(n,p,k)}\|u_h\|_{L^2(M)}, 
\end{equation}
where
\begin{equation}\label{eqn:delta}
\delta(n,p,k)=\begin{cases}
   \frac{n-1}{4}-\frac{n-1}{2p} & 2\leq p \leq \frac{2(n+1)}{n-1} \\ \frac{n-1}{2} -\frac{n}{p} -\frac{1}{k+1}\left(\frac{n-1}{2}-\frac{n+1}{p} \right) & \frac{2(n+1)}{n-1}\leq p \leq \infty
\end{cases}.
\end{equation}
\end{theorem}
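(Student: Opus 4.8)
The plan is to reduce the global $L^p$ estimate to a model estimate on a coordinate patch, build a microlocal normal form for the pair $(p_1,p_2)$ adapted to the $k$-th order contact, and then run a $TT^*$/stationary-phase argument on the resulting oscillatory integral. First I would use the compact microlocalisation: since $(1-\chi(x,hD))u_h = O(h^\infty)$, it suffices to bound $\chi(x,hD)u_h$, and by a partition of unity and the standard reduction (see \cite{SoggeLp,KTZ}) I can work in a fixed coordinate chart where $p_1$ has curved characteristic set. The hypothesis that $u_h$ is a \emph{strong} joint quasimode is what lets me localise to a small conic neighbourhood of the contact point $\xi_0$: applying powers of $P_1$ and $P_2$ shows $u_h$ is, up to $O(h^\infty)$, microlocalised to $\{|p_1|\lesssim h^{1-\epsilon}\}\cap\{|p_2|\lesssim h^{1-\epsilon}\}$, so the whole problem takes place in an $h$-dependent neighbourhood of $\xi_0$ whose shape is dictated by the contact geometry.

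Next I would put $(p_1,p_2)$ in a normal form. After a symplectic/coordinate change, arrange $\xi_0 = e_n$ and $H^1_{x_0} = \{\xi_n = 0\}$ near $\xi_0$ with $p_1(x,\xi)\approx \xi_n + q_1(x,\xi')$, where $q_1$ is a nondegenerate quadratic form in $\xi'$ (condition (4)). Because $H^2_{x_0}$ is tangential to $H^1_{x_0}$ at $\xi_0$ and every pair of geodesics has exactly $k$-th order contact, $H^2_{x_0}$ is locally a graph $\xi_n = \phi(\xi')$ with $\phi$ vanishing to order $k+1$ in $|\xi'|$ in every direction; the ``same order in every direction'' assumption means $\phi(\xi') \asymp |\xi'|^{k+1}$ (with $k+1$ even since $k$ is odd, so this is a genuine one-sided vanishing like $|\xi'|^{k+1}$). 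So after subtracting, $p_2 - (\text{elliptic})\cdot p_1$ is, modulo lower order, comparable to $\xi_n - |\xi'|^{k+1}$ (up to the nondegenerate curvature correction). The joint quasimode conditions then force $u_h$ to be microlocalised to $|\xi_n|\lesssim h$ \emph{and} $||\xi'|^{k+1} - (\text{curvature term})|\lesssim h$; the key point is the resulting phase-space box has $\xi'$ confined to a region of measure $\sim h^{\frac{n-1}{k+1}}$-ish in the ``radial'' variable but still full-dimensional angularly — this anisotropy is exactly what produces the $\frac{1}{k+1}$ gain in \eqref{eqn:delta}.

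Then I would write $u_h$ (microlocalised) as an oscillatory integral $u_h(x) = (2\pi h)^{-n}\int e^{\frac{i}{h}\langle x,\xi\rangle} a(x,\xi) \hat{u}_h(\xi)\, d\xi$ with $a$ supported in the $h$-box described above, and estimate $\|u_h\|_{L^\infty}$ by $\|\hat u_h\|_{L^2}$ times the square root of the measure of the support (Cauchy--Schwarz), and $\|u_h\|_{L^p}$ for $2<p<\infty$ by interpolation after a $TT^*$ argument: the kernel of $TT^*$ is $\int e^{\frac{i}{h}\langle x-y,\xi\rangle} |a|^2 d\xi$, and stationary phase in the $\xi_n$ and angular directions (using nondegeneracy of $q_1$) plus the scaling in the radial $\xi'$ direction gives dispersive decay $|K(x,y)| \lesssim h^{-n}(1 + |x-y|/h)^{-\sigma}$ with an exponent $\sigma$ reflecting both the $(n-1)/2$ from the curved hypersurface $H^1$ and the modified power from the $|\xi'|^{k+1}$ confinement. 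Feeding this into the standard $L^2\to L^p$ machinery (Keel--Tao / Stein--Tomas type, or direct interpolation between the $L^\infty$ bound and $L^2$ conservation) yields \eqref{eqn:linf}; note the low-$p$ branch $2\le p\le \frac{2(n+1)}{n-1}$ is unchanged from Sogge's bound because there the estimate is governed by the curvature of $H^1$ alone and the secondary localisation gives no additional loss.

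The main obstacle I expect is controlling the geometry of the joint localisation region precisely enough — in particular, showing that the ``same order of contact in every direction'' hypothesis genuinely forces the clean model $\xi_n - |\xi'|^{k+1}$ (as opposed to something like $\xi_n - (\text{degenerate form of degree }k+1)$ which could vary in size between directions) and then carrying out the stationary/non-stationary phase analysis uniformly as the radial variable $|\xi'|$ ranges over the relevant dyadic scales. The anisotropic scaling $\xi' \mapsto \lambda \xi'$ interacts with the curvature term $q_1(x,\xi')$ in $p_1$ — one must check these two constraints ($|\xi_n + q_1|\lesssim h$ and $|\xi_n - |\xi'|^{k+1}|\lesssim h$) are compatible and that their intersection has the claimed measure; a dyadic decomposition in $|\xi'|$ with summation of the pieces is the natural way to handle it, and getting the endpoint exponents to match \eqref{eqn:delta} exactly will require care at $p = \frac{2(n+1)}{n-1}$ and $p=\infty$.
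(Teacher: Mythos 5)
Your high-level strategy (localise, reduce to a normal form adapted to the contact, then $TT^*$ with stationary phase) is the right one, and you correctly flag the real geometric difficulty: proving that ``same order of contact in every direction'' forces the clean model $|\xi'|^{k+1}$. This is exactly the content of the paper's Lemma \ref{lem:vjquasi}, whose proof involves a careful Taylor expansion with two multi-index claims to convert the directional contact hypothesis into the ellipticity estimate $\tilde q(x,\bar\xi)\gtrsim |\bar\xi|^{k+1}$; you would need to carry this out, and it is not obvious.

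However, there is a genuine gap in your localisation step, and a genuinely different (and in your version, lossy) route to the estimate. You propose microlocalising $u_h$ to $\{|p_1|\lesssim h^{1-\varepsilon}\}\cap\{|p_2|\lesssim h^{1-\varepsilon}\}$ up to $O(h^\infty)$ and then estimating the measure of the resulting $\xi$-box by Cauchy--Schwarz. Even granting the microlocalisation (which is delicate since cutoffs at scale $h^{1-\varepsilon}$ lose $h^{-(1-\varepsilon)}$ per derivative), the $h^{1-\varepsilon}$ thickness propagates into the final exponent, giving at best $\|u_h\|_{L^p}\lesssim_\varepsilon h^{-\delta(n,p,k)-C\varepsilon}\|u_h\|_{L^2}$, which is strictly weaker than the sharp bound (\ref{eqn:linf}). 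The paper avoids any such loss by a different pair of devices: it first conjugates by a Fourier integral operator $W(x_1)$ solving $hD_{x_1}W + Wa_1=O(h^\infty)$, so that $v_h=W(x_1)u_h$ is a strong quasimode of \emph{exactly} $hD_{x_1}$ and (by Egorov) of $\tilde q(x,hD_{\bar x})$; it then decomposes $v_h$ via a \emph{continuous wavelet transform in $x_1$} together with a dyadic cutoff in $|\bar\xi|$ at scale $h^{1/(k+1)}2^j$. The wavelet coefficients at scale $a$ decay like $|a|^{3/2}$ for $|a|\leq 1$ because the mother wavelet has mean zero and $hD_{x_1}v_h=O(h)$, and the dyadic frequency pieces decay like $2^{-j(k+1)M}$ by the strong-quasimode iteration; summing these gives the sharp exponent without an $\varepsilon$ loss. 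Your $TT^*$/Van der Corput plan for the frequency-localised pieces is in the right spirit (the paper does exactly this for the operators $W^*_{a,j}W_{a,j}$ and needs a version of stationary phase tolerant of $h$-dependent amplitudes), but without the FIO conjugation and the wavelet decomposition you lack the mechanism to capture the $\xi_1$-localisation sharply, which is where the proposal as written falls short of the theorem.
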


We note that for $2\leq p\leq \frac{2(n+1)}{n-1}$, the $L^p$ estimates from Theorem \ref{thrm:linfcontact} agree with those obtained by Sogge in (\ref{eqn:sogge}). In Example \ref{ex:sharp2} we will see that this agreement is because the saturating examples for Sogge's estimates for these values of $p$ can be found as joint quasimodes of operators $P_{1}$ and $P_{2}$ as described above. However, with the contact assumption for $\frac{2(n+1)}{n-1}<p\leq \infty$, we have a quantitative gain from (\ref{eqn:sogge}), which is illustrated in Figure \ref{fig:lpgraph}. This gain is most pronounced when $p=\infty$ and $k$ is small. Particularly, for $p=\infty$ and $k=1$, note that Theorem \ref{thrm:linfcontact} gives
\[
\|u_h\|_{L^\infty}\lesssim h^{-\frac{n-1}{4}}\|u_h\|_{L^2} \qquad \text{for} \quad k=1,
\]
while (\ref{eqn:sogge}) gives $\|u_h\|_{L^\infty}\lesssim h^{-\frac{n-1}{2}}\|u_h\|_{L^2}$. 
 \begin{figure}[ht] 
\begin{center}
\includegraphics[width=.5\textwidth]{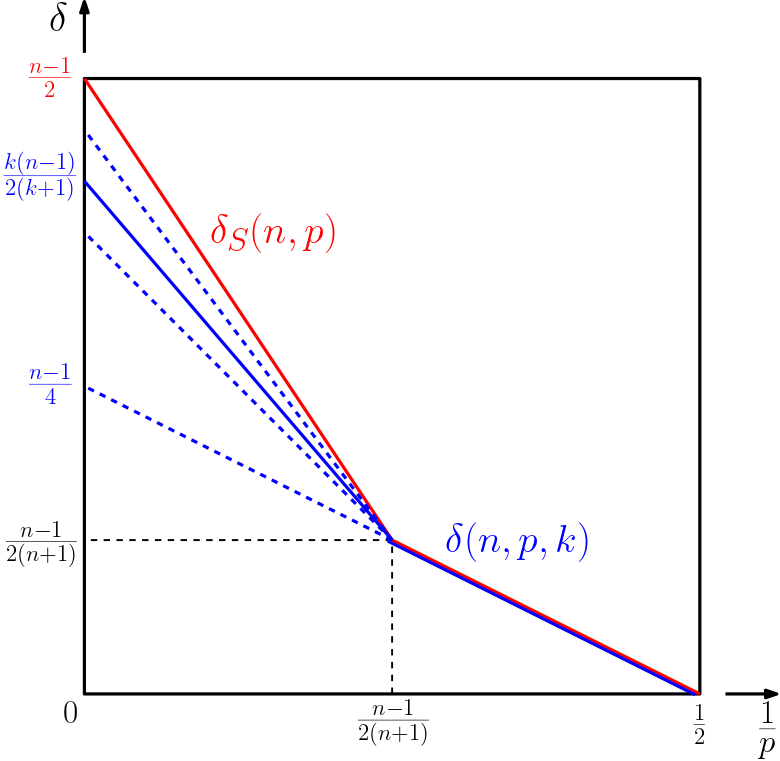} 
\end{center}
\caption{Comparison of $\delta(n,p,k)$ from Theorem \ref{thrm:linfcontact} (in \textcolor{blue}{blue}) for various values of $k$ with Sogge's estimate $\delta_{S}(n,p)$ from \cite{SoggeLp} (in \textcolor{red}{red}). } \label{fig:lpgraph}
\end{figure}
We will show in Examples \ref{ex:sharp} and \ref{ex:sharp2} in the next section that the bound given by (\ref{eqn:linf}) is sharp. Furthermore, note that Theorem \ref{thrm:linfcontact} recovers the result of \cite{TacyContact} if we set for $n=2$ .

\begin{remark}
    In the statement of Theorem \ref{thrm:linfcontact}, we assumed that there is only one point of contact. One can also handle multiple points of contact as long as they are at least $O(1)$ apart. To prove the theorem, we work in an $O(1)$ neighbourhood around the contact point and show $\|\chi u_h\|_{L^p}\lesssim h^{-\delta(n,p,k)}\|u_h\|_{L^2}$ where $\chi$ is supported in a small neighbourhood about the contact point. For multiple points of contact, the worst $L^p$ estimate would come from applying Theorem \ref{thrm:linfcontact} around the contact point with the largest order $k$. 
\end{remark}

The proof of Theorem \ref{thrm:linfcontact} follows a similar framework to \cite{TacyContact}. Without loss of generality, we will assume that the point of contact occurs at $(x_0,\xi_0)=(0,0)$. Next, we begin by using a Fourier integral operator $W$ to map the quasimode $u_h$ of $P_1$ and $P_2$ to a quasimode $v_h$ of $h{D_{x_1}}$ and some operator $q(x,hD)$. Moreover, using the structure of the contact, we can locally write
\[
q(x,\xi)\approx C|\xib|^{k+1},
\]
 where we denote $\xi=(\xi_1,\xi_2,\dots,\xi_n)=:(\xi_1,\xib)$.
Compared to $u_{h}$, it is easier to analyse $v_h$, as $v_h$ is a solution of the two ``simpler" equations, 
\[
hD_{x_1}v_h=O_{L^2}(h) \quad \text{and} \quad q(x,hD)v_h=O_{L^2}(h).
\]
Taking the semiclassical Fourier transform $\Fh$ gives 
\[
\xi_1 \Fh[v_h]=O_{L^2}(h) \quad \text{and} \quad  C|\xib|^{k+1} \Fh[v_h]=O_{L^2}(h),
\]
where
\[
\Fh[u](\xi)=\frac{1}{(2\pi h)^{\frac{n}{2}}}\int e^{-\frac{i}{h}\langle x,\xi \rangle}u(x) dx.
\]
Therefore, $\Fh[v_h]$ is concentrated in an $h$-region around $\xi_1=0$ and $|\xib|^{k+1}=0$. Hence, we expect $\Fh[v_h]$ to be approximately localised in an $h\times h^{\frac{n-1}{k+1}}$ region of phase space.  Using the structure of $W$ and the localisation of $\Fh[v_h]$ we are able to obtain the $L^p$ estimates on $u_h$.

\begin{remark}
    The assertion that $k$ is odd is related to the assumption that there is only one point of contact (in an $O(1)$ neighbourhood). Since there is only one point of contact, the hypersurfaces $\{p_j(0,\xi)=0\}$ do not pass through each other. If the order of contact were even, then roughly $q(x,\xi)$, which can be thought of as measuring the difference between the two hypersurfaces, behaves like $\xib_j^{k+1}$ where $k+1$ is odd. However, this implies that $q(x,\xi)$ vanishes at points other than $\xi=0$, which means that the hypersurfaces have more contact points. The details of this will become apparent in Section \ref{sec:setup}. In \cite{TacyContact}, the $n=2$ dimensional case, $k$ is allowed to be even, because for $n=2$, $q(x,\xi)\approx\xi^{k+1}$ for $\xi\in\R$ does not contradict the assumption that the hypersurfaces only meet at one point. 
\end{remark}

\subsection{Examples}\label{sec:examples}
We next consider a few examples to illustrate Theorem \ref{thrm:linfcontact}. Particularly, for characteristic function $\chi_h(\xi)$ we consider the functions
\begin{equation}\label{eqn:tchi}
T_{\chi_h}(x)=\frac{1}{(2\pi h)^{\frac{n}{2}}\|\chi_h\|_{L^2}}\int e^{\frac{i}{h}\langle x, \xi \rangle} \chi_h(\xi) d\xi=\frac{\Fh^{-1}[\chi_h](x)}{\|\chi_h\|_{L^2}}.
\end{equation}
It is easy to see that these functions are $L^2$-normalised since $\Fh$ is a unitary operator on $L^2$. 
\begin{example}[Sharpness of (\ref{eqn:linf}) for large $p$]\label{ex:sharp}
    Let 
    \[
    p_1(\xi)=\xi_1-|\xib|^2 \quad \text{and} \quad p_2(\xi)=\xi_1-\left(|\xib|^2-|\xib|^{k+1}\right),
    \]
    where $\xi=(\xi_1,\xib)$. Note that $p_1$ indeed satisfies the curvature condition. Furthermore $\{p_1=0\}$ and $\{p_2=0\}$ meet at $\xi=0$ with $k$-th order contact. To see this, observe that
    \[
    p_1-p_2=|\xib|^{k+1}
    \]
    and note that along any line through $\xi=0$ the derivatives of $|\xib|^{k+1}$ vanish to the $k$-th order.
    To saturate the $L^p$ estimates (for large $p$), one usually tries to spread the support $\Fh[u]$ through the largest possible region. Therefore, we define $\chi_h$ by
    \[
    \chi_h=\begin{cases}
        1 & |p_1|\leq h, \,\, |p_2|\leq h%, \,\,  |\xi_2|\lesssim h^{\frac{1}{k+1}}, \,\, |\xi_3|\lesssim h^{\frac{1}{k+1}} 
        \\
        0 & \text{otherwise}
    \end{cases}
    \]
     and $T_{\chi_h}(x)$ be as defined in (\ref{eqn:tchi}). We see that $T_{\chi_h}(x)$ is a joint quasimode of order $h$ of $p_1(hD)$ and $p_2(hD)$ since on the support of $\chi_h$ we have $|p_1|,|p_2|\leq h$. Indeed, note
     \begin{align*}
     \left\|P_1(hD)^{M_1}P_2(hD)^{M_2}T_{\chi_h}(x)\right\|_{L^2} & =\frac{1}{\|\chi_h\|_{L^2}} \left\|P_1(hD)^{M_1}P_2(hD)^{M_2} \Fh^{-1}[\chi_h]\right\|_{L^2} \\
     & =\frac{1}{\|\chi_h\|_{L^2}} \left\|p_1^{M_1} p_2^{M_2} \chi_h\right\|_{L^2} \leq h^{M_1+M_2}\|T_{\chi_h}\|_{L^2}.
     \end{align*}
      We also see that for $|p_1|,|p_2|\leq h$, 
     \[
     |\xib|^{k+1}=|p_1-p_2|\leq 2h ,
     \]
     which implies that $|\xi_j|\lesssim h^{\frac{1}{k+1}}$ for $j=2,3,\dots,n$.
     Moreover, one can check that for
    \[
    |x_1|\ll h^{1-\frac{2}{k+1}}, \qquad  \text{and} \qquad |x_j|\ll h^{1-\frac{1}{k+1}} \,\, \text{for} \,\, j=2,3,\dots,n
    \]
    the phase $\frac{i}{h}\langle x, \xi \rangle$ in (\ref{eqn:tchi}) is not oscillating. Hence
    \[
    |T_{\chi_h}(x)|\gtrsim \frac{1}{h^{\frac{n}{2}}\|\chi_h\|_{L^2}} \|\chi_h\|_{L^1}=h^{-\frac{n}{2}} \sqrt{\operatorname{Vol}(\supp\chi_h)}.
    \]
    We compute $\operatorname{Vol}(\supp\chi_h) \simeq h^{1+\frac{n-1}{k+1}}$, and so 
    \[
    |T_{\chi_h}(x)|\gtrsim  h^{-\frac{n}{2}+\frac{1}{2}+\frac{n-1}{2(k+1)}} =h^{-\frac{n-1}{2}\left(1-\frac{1}{k+1}  \right)}.
    \]
   Therefore, the $L^\infty$ estimate provided in Theorem \ref{thrm:linfcontact} is sharp. Furthermore,
    \begin{align*}
    \|T_{\chi_h}(x)\|_{L^p} &\gtrsim  h^{-\frac{n-1}{2}\left(1-\frac{1}{k+1}  \right)}  \left( 
    \int \mathds{1}_{|x_1|\leq h^{1-\frac{2}{k+1}}} dx_1 \left(\int \mathds{1}_{|t|\leq h^{1-\frac{1}{k+1}} } dt \right)^{n-1}\right)^{1/p} \\ 
    &\gtrsim h^{-\frac{n-1}{2}\left(1-\frac{1}{k+1}  \right)} \left( h^{1-\frac{2}{k+1}} h^{\left(1-\frac{1}{k+1}\ \right)(n-1)}  \right)^{1/p} \\
    &=h^{-\frac{n-1}{2}\left(1-\frac{1}{k+1}  \right) +\frac{n}{p}-\frac{n+1}{p(k+1)}},
    \end{align*}
    which implies that the estimate given in (\ref{eqn:linf}) is sharp for $\frac{2(n+1)}{n-1}\leq p \leq \infty$. 
\end{example}

\begin{example}[Sharpness of (\ref{eqn:linf}) for small $p$]\label{ex:sharp2}
 As in the previous example, let 
  \[
    p_1(\xi)=\xi_1-|\xib|^2 \quad \text{and} \quad p_2(\xi)=\xi_1-\left(|\xib|^2-|\xib|^{k+1}\right).\]
 Define
    \[
    \chi_h=\begin{cases}
        1 & |p_1|\leq h, \,\, |p_2|\leq h, \,\,  |\xi_j|\lesssim h^{1/2} \,\, \text{for} \,\, j=2,3,\dots,n
        \\
        0 & \text{otherwise}
    \end{cases}
    \]
     and $T_{\chi_{h}}(x)$ be as defined in (\ref{eqn:tchi}). As in the previous example, $T_{\chi_{h}}(x)$ is a joint quasimode of order $h$ of $p_1(hD)$ and $p_2(hD)$ since on the support of $\chi$ we have $|p_1|,|p_2|\leq h$.
     Moreover for
    \[
    |x_1|\ll 1, \quad |x_j|\ll h^{1/2} \,\, \text{for} \,\, j=2,3,\dots,n
    \]
    the phase $\frac{i}{h}\langle x, \xi \rangle$ from (\ref{eqn:tchi}) is not oscillating and hence
    \[
    |T_{\chi_h}(x)|\gtrsim \frac{1}{h^{n/2}\|\chi_h\|_{L^2}} \|\chi_h\|_{L^1}=h^{-n/2} \sqrt{\operatorname{Vol}(\supp\chi_h)}.
    \]
    We calculate $\operatorname{Vol}(\supp\chi_h) \simeq h \times h^{\frac{n-1}{2}}$,
    and so 
    \[
    |T_{\chi_h}(x)|\gtrsim  h^{-\frac{n}{2}} h^{\frac{n-1}{4}+\frac{1}{2}}=h^{-\frac{n-1}{4}}.
    \]
Furthermore,
    \begin{align*}
    \|T_{\chi_h}(x)\|_{L^p} &\gtrsim h^{-\frac{n-1}{4}}  \left( 
    \int \mathds{1}_{|x_1|\leq 1} dx_1 \left( \int \mathds{1}_{ |t|\leq h^{\frac{1}{2}}} dt \right)^{n-1} \right)^{1/p} \\ 
    &\gtrsim h^{-\frac{n-1}{4}+\frac{n-1}{2p}} =h^{-\frac{n-1}{2}\left(\frac{1}{2}-\frac{1}{p} \right)},
    \end{align*}
      which implies that the estimate given in (\ref{eqn:linf}) is sharp for $2\leq p \leq \frac{2(n+1)}{n-1}.$ Note that these functions $T_{\chi_{h}}$ resemble the highest weight spherical harmonics (the low $p$ sharp examples for Sogge's \cite{SoggeLp} estimates). The existence of this family of examples precludes any improvement in the low $p$ estimates by adding an additional operator. 
\end{example}

The contact condition described in Theorem \ref{thrm:linfcontact} states that each pair of geodesics on the characteristic hypersurfaces meet with contact of order $k$, where the order of contact between two curves is given by the order in which their derivatives agree. As mentioned in Section \ref{sec:sor}, one could consider examples where the order of contact varies. In the following example, we illustrate what can go wrong if we widen our notion of contact to include different orders along different geodesics. 
\begin{example}(Varying orders of contact) Instead of assuming the characteristic hypersurfaces meet with $k$-th order contact along each line, in this example, we consider characteristic sets that meet with what we call $1,k$ type contact. That is, the characteristic sets meet with first-order contact along all but one line, which has $k$-th order contact.  
  For concreteness, set $n=3$. The ``model" case which exhibits this type of contact is the following: Let $p_1(\xi)=\xi_1-(\xi_2^2+\xi_3^{2})$ and $p_2(\xi)=\xi_1-(2\xi_2^2+\xi_3^2+\xi_3^{k+1})$. Then
    \[
    p_1-p_2=\xi_2^2+\xi_3^{k+1}
    \]
    has first order contact along all lines $\{\xi_3=m\xi_2\}$ for $m\in\R$, and along the line $\{\xi_2=0\}$ has $k$-th order contact. Next, let
    \[
    \chi_h(\xi)=\begin{cases}
        1 & |p_1|\leq h, \,\, |p_2|\leq h%, \,\,  |\xi_2|\lesssim h^{\frac{1}{k+1}}, \,\, |\xi_3|\lesssim h^{\frac{1}{k+1}} 
        \\
        0 & \text{otherwise}
    \end{cases}
    \]
    and $T_{\chi_{h}}(x)$ be as defined in (\ref{eqn:tchi}). Once again, one can check that $T_{\chi_{h}}(x)$ is a joint quasimode for $p_1(hD)$ and $p_2(hD)$, and that $p_1$ satisfies the curvature condition. Moreover, on the support of $\chi_h$, we have
    \[
    \xi_2^2+\xi_3^{k+1}=|p_1-p_2|\leq 2h,
    \]
    which implies $|\xi_2|\lesssim h^{1/2}$ and $|\xi_3|\lesssim h^{\frac{1}{k+1}}$. As in the previous two examples, we can show that
    \[
    \operatorname{Vol}(\supp\chi_h)\simeq h\times h^{\frac{1}{2}}\times h^{\frac{1}{k+1}},
    \]
    and so,
    \[
    |T_{\chi_h}(x)|\gtrsim h^{-\frac{3}{2}}\sqrt{\operatorname{Vol}(\supp\chi_h)}=h^{-\frac{3}{4}+\frac{1}{2(k+1)}}.
    \]
    %and
    %\[
    %\|T_{\chi_h}(x)\|_{L^p}\gtrsim  h^{-\frac{3}{4}+\frac{5}{2p}}h^{\frac{1}{k+1}\left(\frac{1}{2}-\frac{3}{p} \right)}
   % \]
   We call this the ``model" case of $1,k$ type contact because both orders of contact naturally appear in the volume of the support of $\chi_h$. Unfortunately, this is not true for all $1,k$ type contact. Consider
   \[
   q_1(\xi)=\xi_1-(\xi_2^2+\xi_3^{2}) \quad \text{and} \quad q_2(\xi)=\xi_1-(\xi_2^2+\xi_3^{2}-(\xi_2-\xi_3^2)^2-\xi_2^{10}) 
   \]
   and
   \[
     \kappa_h(\xi)=\begin{cases}
        1 & |q_1|\leq h, \,\, |q_2|\leq h%, \,\,  |\xi_2|\lesssim h^{\frac{1}{k+1}}, \,\, |\xi_3|\lesssim h^{\frac{1}{k+1}} 
        \\
        0 & \text{otherwise}
    \end{cases}.
   \]
   Let $T_{\kappa_h}(x)$ be as defined in (\ref{eqn:tchi}).  We note that $T_{\kappa_{h}}(x)$ is a joint quasimode for $q_1(hD)$ and $q_2(hD)$, and moreover $q_1$ satisfies the curvature condition. Furthermore, note that along any line $\{\xi_3=m \xi_2\}$ we have
   \[
   q_1-q_2\big|_{\xi_3=m \xi_2}=(\xi_2-\xi_3^2)^2+\xi_2^{10}\big|_{\xi_3=m \xi_2}=\xi_2^2-2 m^2 \xi_2^3+m^4 \xi_2^4+\xi_2^{10},
   \]
   which has vanishing derivatives to the first order. On the other hand, along the line $\xi_2=0$, we note
   \[
   q_1-q_2\big|_{\xi_2=0}=\xi_3^4,
   \]
   which has vanishing derivatives to the third order. So $T_{\kappa_h}(x)$ is an example of $1,3$ type contact. We might expect based on the computation for $\chi_h$ that $|T_{\kappa_h}(x)|\gtrsim h^{-\frac{3}{4}+\frac{1}{8}}$. However, we compute
   \[
   \operatorname{Vol}(\supp\kappa_h)\simeq h \times h^{\frac{1}{2}}\times h^{\frac{1}{20}},
   \]
   and hence,
   \[
   |T_{\kappa_h}|\gtrsim h^{-\frac{3}{4}+\frac{1}{40}}.
   \]
   This lower bound is different from the ``model" example because a factor of $h^{1/20}$ appears in the volume, which has no relation to how we have defined the order of contact. This factor of $h^{1/20}$ occurs because the function $q_1-q_2=(\xi_2-\xi_3^2)^2+\xi_2^{10}$ vanishes along the curve $\xi_2=\xi_3^2$. Therefore, on this curve we have $q_1-q_2\approx \xi_2^{10}=\xi_3^{20}$, and so the ``order of contact" is 19th order. Unfortunately, our notion of contact does not allow us to ``see" this higher-order ``contact" along the parabola $\xi_2=\xi_3^2$. To remedy this, we could update our notion of contact to include all unit-speed paths, not just straight lines/geodesics, through the point of contact. We plan to address this in future work. 
\end{example}

\section{Simplifying Equations Using a Fourier Integral Operator}\label{sec:setup}
In this section, we set up the notation for the remainder of the paper. Mainly, we define the Fourier integral operator $W$, which allows us to map the quasimode $u_h$ to a quasimode of two simpler equations. The key idea is that $W$ will ``flatten'' out the characteristic set of $p_{1}(x,\xi)$ so that it becomes the hyperplane $\{\xi_{1}=0\}$ as depicted in Figure \ref{fig:flatten}. This characteristic set is associated with the differential operator $hD_{x_{1}}$ and so if $v_{h}=Wu_{h}$ we will see that $v_{h}$ is a quasimode for the operator $hD_{x_{1}}$ (Lemma \ref{lem:vquasi}). This means that $v_{h}$ is roughly constant in the $x_{1}$ direction. Using Egorov's Theorem we will further show that, since $u_{h}$ is also a quasimode of $P_{2}$, $v_{h}$ is a quasimode of a second operator $q(x,hD)$, and that the contact condition implies
\[|q(x,\xi)|\geq c(x)|\bar{\xi}|^{k+1},\]
where $\xi=(\xi_{1},\bar{\xi})$ (Lemma \ref{lem:vjquasi}).

\begin{figure}
\centering
\begin{subfigure}{.48\textwidth}
  \centering
  \includegraphics[width=.95\linewidth]{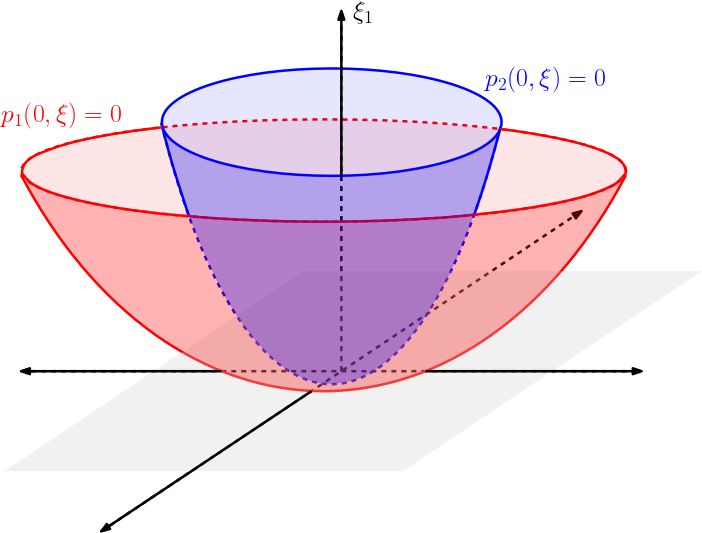}
 % \caption{$k$th order contact at $\xi=0$ between  }
\end{subfigure}%
\,\,
\begin{subfigure}{.48\textwidth}
  \centering
  \includegraphics[width=.95\linewidth]{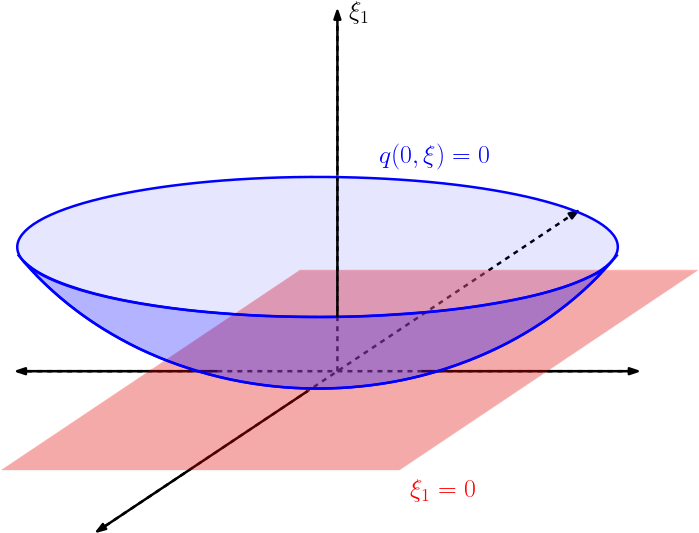}
  %\caption{The resulting hypersurfaces after flattening $\{p_1=0\}$ using the $W(x_1)$ operator}
\end{subfigure}
\caption{A diagram of $k$th order contact between the two characteristic sets $\{p_1=0\}$ and $\{p_2=0\}$ (left) and the resulting hypersurfaces after flattening $\{p_1=0\}$ using the $W$ operator (right). }
\label{fig:flatten}
\end{figure}

\subsection{Localisation and choosing coordinates} Let $(x_0,\xi_0)$ denote the point of contact, which satisfies
\[
\{(x_0,\xi_0)\}= \bigcap_{j=1}^2\{\xi: p_j(x_0,\xi)=0\}.
\]
Without loss of generality, we can, by translation, assume $(x_0,\xi_0)=(0,0)$. It will be sufficient to estimate $\|\chi(x,hD)u_h\|_{L^p}$ where $\chi\in\cci$ is supported in a sufficiently small (but not $h$-dependent) neighbourhood of $(0,0)$. To see this, first note that $\chi(x,hD)u_h$ is still a strong joint quasimode of order $h$ for $P_1$ and $P_2$. Indeed, for $j=1,2$ we can write
\[P_j^M \chi(x,hD)u_h=\sum_{\ell=0}^M\binom{M}{\ell}h^\ell R_\ell P_j^{M-\ell}u_h,\]
where $R_0=\chi(x,hD)$ and $R_\ell=\frac{1}{h}[P_j,R_{\ell-1}]$. Moreover, since the commutator of two pseudodifferential operators satisfies 
\[
[P_j,R_{\ell-1}]=\frac{h}{i}\{ p_j,r_{\ell-1}\}(x,hD)+O_{L^2\to L^2}(h^2),
\]
we know that $\|R_\ell\|_{L^2\to L^2}=O(1)$. Hence, using that $u_h$ is a strong quasimode for $P_j$ we have
\[
\|P_j^M \chi(x,hD)u_h \|_{L^2}\leq \sum_{\ell=0}^M\binom{M}{\ell}h^\ell \|R_\ell\|_{L^2\to L^2} \|P_j^{M-\ell}u_h\|_{L^2}\lesssim h^M \|u_h\|_{L^2}.
\]
Next, note that if $\chi\in\cci$ is supported sufficiently close to $\rho_0$ where $p_j(\rho_0)\not=0$ for some $j$, then on the support of $\chi$, $P_j$ is invertible. Moreover, $P_j^M$ is invertible. Using that $\chi(x,hD) u_h$ is a strong quasimode of $P_j$ we know
\[
P_j^M \chi(x, hD)u_h =h^M f,
\]
where $\|f\|_{L^2}\leq \|u_h\|_{L^2}$. Since $P_j^M$ has an inverse, we have
\[
\chi(x, hD)u_h=h^M (P_j^M)^{-1}f
\]
and so, 
\[
\|\chi(x,hD)u_h\|_{L^2}\lesssim h^M\|u_h\|_{L^2}.
\]
Applying semiclassical Sobolev estimates, we obtain
\[
\|\chi(x,hD)u_h\|_{L^p}\lesssim h^{\frac{n}{p}-\frac{n}{2}+M}\|u_h\|_{L^2}.
\]
Choosing $M$ large enough, we obtain a better estimate than those provided in Theorem \ref{thrm:linfcontact}, and thus we can ignore these negligible contributions. Therefore, we only need to consider $\chi(x,hD)u_h$ where $\chi(x,\xi)$ is supported sufficiently close to $(0,0).$ Furthermore, since we only need to consider $u_h$ localised in phase space, it suffices to work on a coordinate patch of $M$ associated to a neighbourhood of $0$ in $\R^n$. Thus, we work over the patch in $\R^n$ for the remainder of the paper.

Next, since $\{\xi:p_1(0,\xi)=0\}$ is a smooth hypersurface we know
\[
\grad_{\xi}p_1(0,0)\not=0.
\]
After possibly rotating coordinates, we may assume 
\[
\partial_{\xi_1}p_1(0,0)\not=0 \quad \text{and} \quad \grad_{\xib}p_1(0,0)=0,
\]
where we write $\xi=(\xi_1,\xib)$.
Furthermore, we can assume that on the support of $\chi$ (possibly shrinking the support of $\chi$ if needed) that we have $|\partial_{\xi_1}p_1(x,\xi)|>c>0$. Therefore, by the Implicit Function Theorem, we may write
\[
p_1(x,\xi)=e_1(x,\xi)(\xi_1-a_1(x,\xib)),
\]
where $|e_1(x,\xi)|>0$ and hence $e_1(x,hD)$ is invertible. Since $u_h$ is a quasimode of $P_1$ and since $e_1(x,hD)$ is elliptic, $u_h$ also satisfies
\[
(hD_{x_1}-a_1(x, hD_{\xb}))u_h=O_{L^2}(h \|u_h\|_{L^2}),
\]
where we are using the coordinates $x=(x_1,x_2,\dots,x_n)=:(x_1,\xb)$.
The curvature condition of $p_1$ implies that
\[
\partial_{\xib}^2p_1(0,0) \, \text{ is non-degenerate}.
\]
Next, since the characteristic sets are tangential, we must also have that $\grad_{\xib}p_2(0,0)=0$. However, since $\{\xi:p_2(0,\xi)=0\}$ is also a smooth hypersurface, we also know $\grad_{\xi}p_2(0,0)\not=0.$ Thus, as we had for $p_1$, it must be that $\partial_{\xi_1}p_2(0,0)\not=0$. Once again, using the Implicit Function Theorem, we can also factorise $p_2$ as
\[
p_2(x,\xi)=e_2(x,\xi)(\xi_1-a_2(x,\xib)),
\]
where $|e_2|>0$. Therefore, using the ellipticity of $e_2(x,hD)$ we have
\[
(hD_{x_1}-a_2(x, hD_{\xb}))u_h=O_{L^2}(h \|u_h\|_{L^2}).
\]
Therefore, we see that $u_h$ is an order $h$ quasimode of the two slightly easier operators to work with, $(hD_{x_1}-a_1(x, hD_{\xb}))$ and $(hD_{x_1}-a_2(x, hD_{\xb}))$.
\subsection{A Fourier integral operator} Next, we introduce the Fourier integral operator $W$ that ``flattens" the characteristic set associated to $hD_{x_1}-a_1(x, hD_{\xb})$. 
Let $W(x_1):L^2(\R^{n-1})\to L^2(\R^{n-1})$ be the operator so that
\begin{equation}\label{eqn:w1def}
\begin{cases}
h D_{x_1} W(x_1)+W(x_1) a_1(x_1, \bar{x}, h D_{\bar{x}})=O_{L^2}(h^\infty) \\
W(0)=\operatorname{Id}
\end{cases}
\end{equation}
for $|x_1|$ sufficiently small, where $\operatorname{Id}$ denotes the identity operator on $L^2(\R^{n-1})$. 
It is known that the operator $W(x_1)$ is unitary and can be represented using an oscillatory integral (see \cite[Theorem 10.4]{zworski}). We have
\begin{equation}\label{eqn:w1int}
[W(x_1) f](\bar{y}) =\frac{1}{(2 \pi h)^{n-1}} \int_{\R^{n-1}} \int_{\R^{n-1}} e^{\frac{i}{h} (\langle \bar{y}, \bar{\eta} \rangle -\varphi_1(x_1,\bar{x},\bar{\eta}))} b^h(x,\bar{\eta}) f(\bar{x}) d\bar{x}  d\bar{\eta}, \quad  \bar{y}\in \R^{n-1},
\end{equation}
where $\varphi_1$ satisfies
\begin{equation}\label{eqn:phi1def}
\begin{cases}
\partial_{x_1} \varphi_1 -a_1(x_1,\bar{x},\partial_{\bar{x}} \varphi_1)=0 \\
\varphi_1(0,\bar{x},\bar{\eta})=\langle \bar{x}, \bar{\eta} \rangle
\end{cases}
\end{equation}
and $b^h(x_1,\cdot,\cdot)\in \cci (\R^{n-1}\times \R^{n-1})$ satisfies
\[
b^h(x_1,\xb,\xib) \sim \sum_{j=0}^\infty h^j b_j(x_1,\xb,\xib)
\]
and $b_0(0,\xb,\xib)=1$.
Note that (\ref{eqn:w1def}) is equivalent to
\begin{equation}\label{eqn:w*def}
(h D_{x_1}-a_1(x_1,\bar{x},hD_{\bar{x}})) W^*(x_1)=O_{L^2}(h^\infty).
\end{equation} 
To see this, note that since $W(x_1)$ is unitary we have
\begin{align*}
  0=  hD_{x_1}\left( W(x_1) W^*(x_1) \right) &= \left(hD_{x_1}W(x_1) \right) W^*(x_1) +W(x_1) \left(hD_{x_1}W^*(x_1) \right) \\
  &=W(x_1)\left(hD_{x_1}-a_1(x,hD_{\xb}) \right) W^*(x_1)+O(h^\infty),
\end{align*}
where we used (\ref{eqn:w1def}). Multiplying by $W^*(x_1)$ on the left implies (\ref{eqn:w*def}).

Next, define
\begin{equation}\label{eqn:vdef}
v_h(x_1,\bar{y}):=W(x_1)u_h(x_1, \cdot)=\frac{1}{(2 \pi h)^{n-1}} \int_{\R^{n-1}} \int_{\R^{n-1}} e^{\frac{i}{h} (\langle \bar{y}, \bar{\eta} \rangle -\varphi_1(x_1,\bar{x},\bar{\eta}))} b^h(x_1,\bar{x},\bar{\eta}) u_h(x_1, \bar{x}) d\bar{x}  d\bar{\eta}.
\end{equation}
The following lemma shows that $v_h$ is a strong quasimode of $h D_{x_1}$ of order $h$. 

\begin{lemma}\label{lem:vquasi}
    Let $W(x_1)$ and $v_h$ be given by (\ref{eqn:w1int}) and (\ref{eqn:vdef}) respectively. Then $v_h$ is a strong quasimode of order $h$ for $hD_{x_1}$. 
 \end{lemma}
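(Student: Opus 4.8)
\noindent\emph{Proof plan.} The plan is to differentiate the defining relation $v_h(x_1,\cdot)=W(x_1)u_h(x_1,\cdot)$ in $x_1$, use the transport identity \eqref{eqn:w1def} to trade each $hD_{x_1}$ hitting $v_h$ for the operator $A_1:=hD_{x_1}-a_1(x,hD_{\bar x})$ hitting $u_h$, and then exploit that $u_h$ is a strong quasimode for $A_1$. To make the last point precise I would first upgrade the hypothesis $\|P_1^M u_h\|_{L^2}\lesssim h^M\|u_h\|_{L^2}$ to $\|A_1^M u_h\|_{L^2}\lesssim h^M\|u_h\|_{L^2}$ for every $M\in\N$. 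Since $P_1=e_1(x,hD)A_1$ with $e_1$ elliptic, a standard commutator computation gives $(e_1(x,hD)A_1)^M=\sum_{j=0}^M h^{M-j}B_{M,j}A_1^j$ with each $B_{M,j}$ bounded on $L^2$ uniformly in $h$ and $B_{M,M}=e_1(x,hD)^M$; inverting the elliptic factor $e_1(x,hD)^M$ and inducting on $M$ then yields the claim.

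Next I would set up the intertwining. Let $\mathcal W$ be the operator on $L^2(\R^n)$ defined by $(\mathcal W f)(x_1,\bar y):=[W(x_1)f(x_1,\cdot)](\bar y)$, so that $v_h=\mathcal W u_h$ and, because each $W(x_1)$ is unitary on $L^2(\R^{n-1})$, $\mathcal W$ is unitary on $L^2(\R^n)$; in particular $\|v_h\|_{L^2}=\|u_h\|_{L^2}$. Writing the $O(h^\infty)$ error in \eqref{eqn:w1def} as $E(x_1)$, differentiating $(\mathcal W f)(x_1,\bar y)$ in $x_1$ and substituting \eqref{eqn:w1def} gives the identity
\[
hD_{x_1}(\mathcal W f)=\mathcal W(A_1 f)+\mathcal E f,
\]
where $(\mathcal E f)(x_1,\bar y):=[E(x_1)f(x_1,\cdot)](\bar y)$ has operator norm $\sup_{|x_1|\text{ small}}\|E(x_1)\|_{L^2\to L^2}=O(h^\infty)$. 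Since the oscillatory-integral construction produces $W(x_1)$ as a smooth family, all $x_1$-derivatives of $E(x_1)$ are again $O(h^\infty)$ in operator norm, so the operators $\mathcal E^{(\ell)}$ obtained by replacing $E(x_1)$ with $(hD_{x_1})^\ell E(x_1)$ also have norm $O(h^\infty)$.

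Applying $hD_{x_1}$ repeatedly and using the identity above (with $f$ successively equal to $u_h, A_1 u_h, A_1^2 u_h,\dots$, and with the product rule on the $\mathcal E^{(\ell)}$-terms) produces an expansion of the shape
\[
(hD_{x_1})^M v_h=\mathcal W(A_1^M u_h)+\sum_{\text{finitely many}}\mathcal E^{(\ell)}(T u_h),
\]
in which each $T$ is a word of length at most $M$ in the operators $hD_{x_1}$ and $A_1$. The leading term is controlled by the first step: $\|\mathcal W(A_1^M u_h)\|_{L^2}=\|A_1^M u_h\|_{L^2}\lesssim h^M\|u_h\|_{L^2}$. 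For the remaining terms, compact microlocalisation of $u_h$ (write $u_h=\chi(x,hD)u_h+O(h^\infty)$) forces $\|T u_h\|_{L^2}\lesssim\|u_h\|_{L^2}$, because $T\chi(x,hD)$ has compactly supported symbol and is therefore $L^2$-bounded; combined with $\|\mathcal E^{(\ell)}\|_{L^2\to L^2}=O(h^\infty)$ this makes each such term $O(h^\infty)\|u_h\|_{L^2}$. Summing the finitely many contributions gives $\|(hD_{x_1})^M v_h\|_{L^2}\lesssim h^M\|u_h\|_{L^2}=h^M\|v_h\|_{L^2}$, which is exactly the assertion.

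The only genuinely delicate point -- and the step I expect to be the main obstacle -- is the bookkeeping in this last expansion: one must verify that iterating $hD_{x_1}$ never produces a factor that is unbounded on $L^2$. This is handled by the two structural observations already isolated, namely that $x_1$-derivatives of the transport error in \eqref{eqn:w1def} remain $O(h^\infty)$ and that compact microlocalisation of $u_h$ bounds any bounded-length word in $hD_{x_1}$ and $A_1$ applied to $u_h$. Everything else reduces to the product rule and the elementary estimates above.
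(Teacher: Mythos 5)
Your proposal is correct and follows essentially the same route as the paper: differentiate $v_h=W(x_1)u_h$ in $x_1$, use the transport relation \eqref{eqn:w1def} to trade $hD_{x_1}$ on $v_h$ for $hD_{x_1}-a_1(x,hD_{\bar x})$ on $u_h$, induct to get $(hD_{x_1})^Mv_h=W(x_1)(hD_{x_1}-a_1)^Mu_h+O_{L^2}(h^\infty)$, and then invoke unitarity of $W(x_1)$ together with the strong-quasimode property of $u_h$. The only difference is that you spell out two bookkeeping points the paper leaves implicit, namely the elliptic/commutator argument that upgrades the $P_1$ quasimode property to a strong quasimode property for $hD_{x_1}-a_1(x,hD_{\bar x})$, and the observation that $x_1$-derivatives of the $O(h^\infty)$ transport error stay $O(h^\infty)$ so the induction closes; both are sound.
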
 
  \begin{proof}
First, using (\ref{eqn:w1def}), we note
\begin{align*}
h D_{x_1} v_h= hD_{x_1} (W(x_1) u_h(x_1,\bar{x})) &= (hD_{x_1}W(x_1)) u_h + W(x_1) hD_{x_1}u_h \\
&= W(x_1) \left( hD_{x_1} - a_1(x_1,\bar{x},hD_{\bar{x}})  \right)u_h +O_{L^2}(h^\infty).
\end{align*}
Next assume for some $M\in\N$ we have $(h D_{x_1})^M v_h =W(x_1)(h D_{x_1}-a_1(x_1,\bar{x},hD_{\bar{x}}))^M u_h+O_{L^2}(h^\infty)$. Consider 
\[
(h D_{x_1})^{M+1} v_h=h D_{x_1}(h D_{x_1})^M v_h =h D_{x_1} (W(x_1)(h D_{x_1}-a_1(x_1,\bar{x},hD_{\bar{x}}))^M u_h) +O_{L^2}(h^\infty). 
\]
Applying the product rule we have
\begin{align*}
(h &D_{x_1})^{M+1} v_h \\
&=(h D_{x_1}W(x_1))(h D_{x_1}-a_1(x_1,\bar{x},hD_{\bar{x}}))^M u_h + W(x_1) h D_{x_1}((h D_{x_1}-a_1(x_1,\bar{x},hD_{\bar{x}}))^M u_h)) +O_{L^2}(h^\infty)  \\
&=W(x_1)(h D_{x_1}-a_1(x_1,\bar{x},hD_{\bar{x}}))^{M+1} u_h +O_{L^2}(h^\infty) .
\end{align*}
Thus, by induction, we have for all $M\in\N$, 
\begin{equation}\label{eqn:induction}
(h D_{x_1})^M v_h =W(x_1)(h D_{x_1}-a_1(x_1,\bar{x},hD_{\bar{x}}))^M u_h +O_{L^2}(h^\infty).
\end{equation}
Therefore, since $W(x_1)$ is unitary we have
\[
\| (h D_{x_1})^M v_h \|_{L_{\bar{y}}^2} \leq \| (h D_{x_1}-a_1(x_1,\bar{x},hD_{\bar{x}}))^M u_h\|_{L_{\bar{x}}^2}.
\]
Next, since $u_h$ is a strong quasimode for $(h D_{x_1}-a_1(x_1,\bar{x},hD_{\bar{x}}))$ and using the localisation we have imposed on $u_h$ we have
\[
\|(h D_{x_1})^M v_h \|_{L^2} \lesssim h^M \|u_h\|_{L^2},
\]
as desired.
\end{proof}
\subsection{Incorporating the second equation}
In the previous subsection we saw that the operator $W(x_1)$ maps the quasimode $u_h$ of $hD_{x_1}-a_1(x,hD_{\xb})$ to a quasimode $v_h$ of $h D_{x_1}$:
\[
\|(hD_{x_1}-a_1(x,hD_{\xb})^M u_h\|_{L^2}\lesssim h^M\|u_h\|_{L^2} \quad \underset{W(x_1)}{\implies} \quad   \| (hD_{x_1})^Mv_h\|_{L^2}\lesssim h^M\|u_h\|_{L^2}.
\]
We also need to convert the information that $u_h$ is a quasimode for $hD_{x_1}-a_2(x,hD_{\xb})$ over to $v_h$ using $W(x_1)$. Notice that since $u_h=W^*(x_1)v_h$, and using that $u_h$ is also a quasimode of $hD_{x_1}-a_2(x,hD_{\xb})$, we have
\[
\|W(x_1)(hD_{x_1}-a_2(x,hD_{\xb})) W^*(x_1) v_h\|_{L^2}\lesssim \|(hD_{x_1}-a_2(x,hD_{\xb})) u_h\|_{L^2}\lesssim h \|u_h\|_{L^2}.
\]
Therefore, $v_h$ is also a quasimode of
\begin{equation}\label{eqn:qopdef}
Q=q(x,hD_{\xb}):=W(x_1)(hD_{x_1}-a_2(x,hD_{\xb})) W^*(x_1).
\end{equation}
Furthermore, note that since $W$ satisfies (\ref{eqn:w*def}) we have
\[Q=W(x_1)(a_1(x,hD_{\xb})-a_2(x,hD_{\xb})) W^*(x_1)+O(h^\infty).\]
We will use Egorov's Theorem (see, for example, \cite[Theorem 11.1]{zworski}) to understand $Q$ further. Egorov's Theorem states that quantisation given by the conjugation with $W(x_1)$ follows the Heisenberg picture of quantum mechanics. That is, $W(x_1)(hD_{x_1}-a_2(x,hD_{\xb})) W^*(x_1)$ is the quantisation of the classical observable $\kappa_{x_1}^*(a_1-a_2)$ up to an error of size $h$, where $\kappa_{x_1}$ is the classical flow given by:
\[
\kappa_{x_1}(x,\xib)\mapsto (x_1,\xb(x_1),\xib(x_1)), \quad \text{where} \quad \begin{cases}
\frac{d\bar{x}(x_1)}{dx_1}=-\grad_{\bar{\xi}}a_1(x_1,\bar{x},\bar{\xi}) \\
\bar{x}(0)=\bar{x} \\
\frac{d\bar{\xi}(x_1)}{dx_1}=\grad_{\bar{x}}a_1(x_1,\bar{x},\bar{\xi}) \\
\bar{\xi}(0)=\bar{\xi} 
\end{cases}.
\]
Using this flow, we define
\begin{equation}\label{eqn:tildeqdef}
\tilde{q}(x,\xib):= a_1(x_1,\xb(x_1),\xib(x_1))-a_2(x_1,\xb(x_1),\xib(x_1))=\kappa_{x_1}^*(a_1-a_2).
\end{equation}
Then Egorov's Theorem implies
\[
Q=W(x_1)(a_1(x,hD_{\xb})-a_2(x,hD_{\xb})) W^*(x_1) +O(h^\infty)=\tilde{q}(x,hD_{\xb}) + O_{L^2\to L^2}(h).
\]
Since $v_h$ is a quasimode of $Q$, it will also be a strong quasimode of $\tilde{q}(x,hD_{\xb})$, which we show in the following lemma. Furthermore, we will also show that $\tilde{q}(x,\xib)$ has a particular structure. 

\begin{lemma}\label{lem:vjquasi}
    Let $\tilde{q}$ be as defined in (\ref{eqn:tildeqdef}) and $v_h$ as defined in (\ref{eqn:vdef}). Then $v_h$ is a strong joint quasimode of order $h$ of $hD_{x_1}$ and $\tilde{q}(x,hD_{\xb})$. Moreover, on the support of $\chi$ (possibly shrinking the support if needed), we have
    \begin{equation}\label{eqn:qstructure}
    \tilde{q}(x,\xib)\geq c(x)|\xib|^{k+1},% +G(x,\xib)
    \end{equation}
    where $c(x)>0$ on the support of $\chi$. %and $|G(x,\xib)|\lesssim|\xib|^{k+2}$.
\end{lemma}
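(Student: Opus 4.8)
The plan is to prove the two assertions of Lemma~\ref{lem:vjquasi} separately. For the strong-joint-quasimode claim, I would argue exactly as in Lemma~\ref{lem:vquasi}: we already know from Lemma~\ref{lem:vquasi} that $v_h$ is a strong quasimode of $hD_{x_1}$, and $v_h = W(x_1)u_h$ is a quasimode of $Q = W(x_1)(hD_{x_1}-a_2(x,hD_{\bar x}))W^*(x_1)$ because $W(x_1)$ is unitary. To promote this to a \emph{strong} quasimode of $Q$, I would use the identity $Q^M = W(x_1)(hD_{x_1}-a_2(x,hD_{\bar x}))^M W^*(x_1) + O(h^\infty)$ (which follows from unitarity of $W(x_1)$ together with $hD_{x_1}W(x_1) = -W(x_1)a_1(x,hD_{\bar x}) + O(h^\infty)$, by an induction mirroring the one in Lemma~\ref{lem:vquasi}), so that $\|Q^M v_h\|_{L^2} \lesssim \|(hD_{x_1}-a_2)^M u_h\|_{L^2} \lesssim h^M\|u_h\|_{L^2}$ using that $u_h$ is a strong quasimode of $hD_{x_1}-a_2(x,hD_{\bar x})$ and the compact microlocalisation. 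Mixed powers $(hD_{x_1})^{M_1}Q^{M_2}$ are handled the same way since $(hD_{x_1})^{M_1}Q^{M_2}v_h = W(x_1)(hD_{x_1}-a_1)^{M_1}(hD_{x_1}-a_2)^{M_2}u_h + O(h^\infty)$. Finally, since $Q = \tilde q(x,hD_{\bar x}) + O_{L^2\to L^2}(h)$ by Egorov's Theorem, replacing $Q$ by $\tilde q(x,hD_{\bar x})$ in each factor costs only $O(h)$ per factor (the error terms are $O(h)$ in $L^2\to L^2$ and the remaining factors are uniformly bounded after the microlocal cutoff), so $\|\tilde q(x,hD_{\bar x})^{M_2}(hD_{x_1})^{M_1}v_h\|_{L^2}\lesssim h^{M_1+M_2}\|u_h\|_{L^2}$, which is the strong joint quasimode property.

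For the structural claim \eqref{eqn:qstructure}, the key is to unwind the definition $\tilde q(x,\bar\xi) = \kappa_{x_1}^*(a_1-a_2) = a_1(\kappa_{x_1}(x,\bar\xi)) - a_2(\kappa_{x_1}(x,\bar\xi))$ and translate the $k$-th order contact hypothesis (3) of Theorem~\ref{thrm:linfcontact} into a statement about the difference $a_1 - a_2$ near $(0,0)$. Since $p_j(x,\xi) = e_j(x,\xi)(\xi_1 - a_j(x,\bar\xi))$ with $e_j$ elliptic, the characteristic hypersurface $H^j_{x_0}$ is the graph $\{\xi_1 = a_j(x_0,\bar\xi)\}$, so a geodesic on $H^j_{x_0}$ through $\xi_0 = 0$ in direction $v$ projects (in $\bar\xi$) to a curve whose $x_1=a_j$-value is determined by $a_j$. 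The tangency hypothesis (2) forces $\nabla_{\bar\xi}a_1(0,0) = \nabla_{\bar\xi}a_2(0,0)$ (both equal to $0$ in our coordinates), and the $k$-th order contact along every geodesic direction should force all derivatives of $a_1 - a_2$ at $\bar\xi = 0$ up to order $k$ to vanish, while the $(k+1)$-st order term is a nonzero homogeneous polynomial in $\bar\xi$ that is moreover nonvanishing on the unit sphere (here is where $k$ odd and the single-contact-point assumption enter — the leading form cannot change sign, so it is bounded below by $c|\bar\xi|^{k+1}$). Thus $a_1(x,\bar\xi) - a_2(x,\bar\xi) \geq c(x)|\bar\xi|^{k+1}$ near $(0,0)$ by Taylor's theorem with the leading form controlling the remainder on a small enough neighbourhood. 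Finally, since $\kappa_{x_1}$ is a smooth family of symplectomorphisms with $\kappa_0 = \mathrm{id}$ fixing $(0,0)$ and (by the Hamilton equations for $a_1$) its linearisation at $(0,0)$ is close to the identity for $|x_1|$ small, we have $|\bar\xi(x_1)| \simeq |\bar\xi|$ uniformly, so pulling back preserves the lower bound: $\tilde q(x,\bar\xi) = (a_1-a_2)(x_1,\bar x(x_1),\bar\xi(x_1)) \geq c(x)|\bar\xi(x_1)|^{k+1} \gtrsim c(x)|\bar\xi|^{k+1}$, possibly after shrinking the support of $\chi$.

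I expect the main obstacle to be the passage from the geometric contact hypothesis (3), phrased in terms of geodesics on the abstract hypersurfaces $H^1_{x_0}, H^2_{x_0}$, to the analytic statement that $a_1 - a_2$ vanishes to exactly order $k+1$ at $\bar\xi = 0$ with a sign-definite leading form. There are two subtleties: first, geodesics on $H^j_{x_0}$ are not the same as graph curves $\bar\xi \mapsto (a_j(x_0,\bar\xi), \bar\xi)$ over straight lines in $\bar\xi$-space, because the induced metric distorts the parametrisation — but since we only care about the \emph{order} of contact (which derivatives agree) and the two hypersurfaces are tangent at $\xi_0$ with the \emph{same} induced geometry to leading order, a reparametrisation argument shows the order of vanishing of $a_1 - a_2$ restricted to a line is exactly one less than the order of geodesic contact in that direction; second, one must check that "same order $k$ in every direction $v$" plus "$k$ odd" plus "only one contact point in an $O(1)$ neighbourhood" genuinely rules out the leading form vanishing somewhere on the sphere (this is precisely the content of the Remark following the theorem about even versus odd $k$). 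I would handle the first subtlety by working in the graph coordinates from the start and noting that the contact order is a reparametrisation invariant, and the second by observing that if the degree-$(k+1)$ leading form $P_{k+1}(\bar\xi)$ vanished at some $\bar\xi_* \neq 0$ on the unit sphere, then either the contact order in direction $\bar\xi_*$ exceeds $k$ (contradicting uniformity) or $a_1 - a_2$ changes sign, producing a zero set of positive dimension through $0$ and hence additional contact points, contradicting hypothesis (2).
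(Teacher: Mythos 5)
Your proposal follows essentially the same two-part strategy as the paper: the strong-joint-quasimode claim via the identity $Q = W(x_1)(hD_{x_1}-a_2(x,hD_{\bar x}))W^*(x_1)$, the Egorov relation $Q = \tilde q(x,hD_{\bar x}) + O_{L^2\to L^2}(h)$, and an induction exploiting the unitarity of $W(x_1)$ (mirroring the proof of Lemma~\ref{lem:vquasi}); and the structural lower bound \eqref{eqn:qstructure} via translating the geodesic contact hypothesis into vanishing of the Taylor coefficients of $a_1-a_2$ up to order $k$ at $(0,0)$, with the degree-$(k+1)$ leading form shown sign-definite and nonvanishing on the sphere using $k$ odd and the single-contact-point hypothesis, matching the paper's Claims~1 and~2 and the $q_0$-decomposition. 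The one place your route differs is the transfer of the lower bound from $a_1-a_2$ to $\tilde q$: the paper proves $\partial^\alpha\tilde q(0,0)=\partial^\alpha(a_1-a_2)(0,0)$ using only that the flow $\kappa_{x_1}$ is the identity at $x_1=0$, then Taylor expands $\tilde q$ itself and extends by smoothness of the coefficients in $x$, whereas you propose to pull back the pointwise bound on $a_1-a_2$ through $\kappa_{x_1}$. Be aware that $\kappa_{x_1}$ need not fix $(0,0)$ for $x_1\neq 0$ — the Hamiltonian flow equations give $\dot{\bar\xi}(0)=\nabla_{\bar x}a_1(0,0,0)$, which the assumptions do not force to vanish — so the assertion $|\bar\xi(x_1)|\simeq|\bar\xi|$ is not uniform in $\bar\xi$ near $0$, and the paper's derivative-matching argument is the cleaner way to carry out this step.
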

\begin{proof} In Lemma \ref{lem:vquasi} we showed that $v_h$ is a strong quasimode of order $h$ of $hD_{x_1}$. 
    We have also seen that $v_h$ is a quasimode of $Q=W(x_1)(hD_{x_1}-a_2(x,hD_{\xb})) W^*(x_1)$. Furthermore we can express $\tilde{q}(x,hD_{\xb})=Q+hR$ where $R:L^2\to L^2$ is bounded (independent of $h$).
    
    We first show that $v_h$ is a strong quasimode of order $h$ for $\tilde{q}(x,hD_{\xb})$. Let $N\in\N$ and note that we can write
    \[ \tilde{q}(x,hD_{\xb})^Nv_h =(Q+hR)^N v_h = \sum_{j=0}^N h^j R_j(x,hD_{\xb}) Q^{N-j}v_h, \]
     where all the pseudodifferential operators $R_j:L^2\to L^2$ are bounded independent of $h$. Further, using the definition of $Q$ from (\ref{eqn:qopdef}) we have
   \begin{align*}
       \tilde{q}(x,hD_{\xb})^Nv_h  & =\sum_{j=0}^N h^j R_j(x,hD_{\xb}) \big( W(x_1)(hD_{x_1}-a_2(x,hD_{\xb})) W^*(x_1) \big)^{N-j}v_h \\
        &=\sum_{j=0}^N h^j R_j(x,hD_{\xb})  W(x_1)\big(hD_{x_1}-a_2(x,hD_{\xb})\big)^{N-j} W^*(x_1) v_h.
   \end{align*}
  Moreover, since $u_h=W^*(x_1)v_h$ and using that $u_h$ is a strong quasimode of order $h$ for $hD_{x_1}-a_2(x,hD_{\xb})$ we have
   \[
   \|\tilde{q}^N(x,hD_{\xb})v_h\|_{L^2} \lesssim \sum_{j=0}^N h^j \| W(x_1)\big(hD_{x_1}-a_2(x,hD_{\xb})\big)^{N-j} u_h \|_{L^2} \lesssim h^N\|u_h\|_{L^2},
   \]
   where we used the unitarity of $W(x_1).$ Thus indeed $v_h$ is a strong quasimode for $\tilde{q}(x,hD_{\xb})$. Furthermore, using (\ref{eqn:induction}) we see
   \begin{align*}
       \tilde{q}&(x,hD_{\xb})^N(hD_{x_1})^M v_h  \\
       & =  \tilde{q}(x,hD_{\xb})^N W(x_1) (hD_{x_1}-a_1(x,hD_{\xb}))^M u_h +O(h^\infty) \\
       &=\sum_{j=0}^N h^j R_j(x,hD_{\xb})  W(x_1)\big(hD_{x_1}-a_2(x,hD_{\xb})\big)^{N-j}  (hD_{x_1}-a_1(x,hD_{\xb}))^M u_h +O(h^\infty),
   \end{align*}
    where, once again, all the pseudodifferential operators $R_j:L^2\to L^2$ are bounded independent of $h$.
   Therefore, since $u_h$ is a strong quasimode of $hD_{x_1}-a_1(x,hD_{\xb})$ and $hD_{x_1}-a_2(x,hD_{\xb})$ we have
   \[
   \|\tilde{q}(x,hD_{\xb})^N (hD_{x_1})^M v_h \|_{L^2}\lesssim \sum_{j=0}^N h^j\|\big(hD_{x_1}-a_2(x,hD_{\xb})\big)^{N-j}  (hD_{x_1}-a_1(x,hD_{\xb}))^M u_h \|_{L^2}\lesssim h^{N+M}\|u_h\|_{L^2}.
   \]
   This shows that $v_h$ is indeed a strong joint quasimode of $hD_{x_1}$ and $\tilde{q}(x,hD_{\xb})$ as claimed.
   
   Next, we use the contact condition to understand further the structure of 
   \[\tilde{q}(x,\xib)=a_1(x_1,\xb(x_1),\xib(x_1))-a_2(x_1,\xb(x_1),\xib(x_1)).\]
   The contact condition on $p_1$ and $p_2$ implies that $a_1$ and $a_2$ have $k$-th order contact in all directions at $(0,0)$. At $(0,0)$, the first $k$ derivatives of $a_1$ and $a_2$ in each direction agree, and the $k+1$-th derivative differs. Therefore, we will consider all lines passing through $\xi=0$ to understand the contact condition further. We begin by introducing our notation for these lines. Fix some $j\in\{2,\dots,n\}$ and let $\bm^j=(m^{j}_2,m^{j}_3,\dots,m^{j}_n)\in\R^{n-1}$ such that $m^{j}_j=1$. Then any line, except those in the hyperplane $\xi_j=0$, can be written as $\bm^j \xi_j=(m^{j}_2\xi_j,\dots,m^{j}_{j-1}\xi_j,\xi_j,m^{j}_{j+1}\xi_j,\dots,m^{j}_n \xi_j)$. Iterating over $j$ recovers all lines passing through $\xib=0$. Therefore, we can write the contact condition for  $a_1$ and $a_2$  as follows:
   \[
   \frac{d^s}{d\xi_j^s}(a_1-a_2)(0,\bm^j\xi_j)\big|_{\xi_j=0}=0 \quad \text{for all} \,\, s\leq k \,\, \text{and for any} \,\, j \,\, \text{and} \,\, \bm^j 
   \]
   and
   \[
   \frac{d^{k+1}}{d\xi_j^{k+1}}(a_1-a_2)(0,\bm^j\xi_j)\big|_{\xi_j=0}\not=0 \quad \text{for any}  \,\, j \,\, \text{and} \,\, \bm^j . 
   \]
   Additionally, since we have assumed (on the support of $\chi$) that there is exactly one contact point, we know that $a_1-a_2$ is sign definite. Without loss of generality, assume $a_1-a_2\geq 0$ on the support of $\chi.$ If $a_1-a_2\leq 0$ we could study $-\tilde{q}$ instead. 
   
   To prove (\ref{eqn:qstructure}) we will Taylor expand $\tilde{q}(x,\xib)$ around $(0,0)$. Before doing so, we will set up our notation and make two observations. 
   
   \underline{Multiindex Notation 1:} Let $J$ be an $s$-tuple of integers between $2$ and $n$; $J=(j_1,j_2,\dots,j_s)$ and $J\in(\{2,\dots,n\})^{s}$ where $|J|=s$. We denote 
   \[
   \partial^{J}f(\xib)=\partial_{j_1}\partial_{j_2}\cdots\partial_{j_s}f(\xib) \quad \text{where} \quad \partial_{j_i}=\frac{\partial}{\partial_{\xi_{j_i}}}.
   \]
   and
   \[
   \xib^{J}=\xi_{j_1}\xi_{j_2}\cdots \xi_{j_s}.
   \]
   For example, for $J=(2,2,3)$ we have $|J|=3$, $\partial^{J}f(\xib)=\partial_{\xi_2}^2 \partial_{\xi_3}f(\xib)$, and $\xib^J=\xi_2^2\xi_3.$

   \underline{Multiindex Notation 2:} Let $\alpha$  be an $n-1$-tuple of natural numbers; $\alpha=(\alpha_2,\alpha_3,\dots,\alpha_n)\in \N^{n-1}$. We denote $|\alpha|=\alpha_2+\dots+\alpha_n$, $\alpha!=\alpha_2!\alpha_3!\cdots \alpha_n!$, 
   \[
   \partial^\alpha f(\xib)=\partial_{\xi_2}^{\alpha_2} \partial_{\xi_3}^{\alpha_3} \cdots \partial_{\xi_n}^{\alpha_n} f(\xib)  \quad \text{and} \quad \xib^\alpha=\xi_2^{\alpha_2} \xi_3^{\alpha_3} \cdots \xi_n^{\alpha_n}.
   \]
   For example, for $\alpha=(2,1,0,\dots,0)$ we have $\partial^\alpha f(\xib)=\partial_{\xi_2}^2 \partial_{\xi_3}f(\xib)$ and $\xib^\alpha=\xi_2^2 \xi_3$. We can switch from ``$J$-multiindex" notation to ``$\alpha$-multiindex" notation by setting $\alpha_\ell=\{\#i:j_i=\ell\}$. For example $J=(2,2,2,3,4,4)$ corresponds to $\alpha=(3,1,2,0,\dots,0)$. We will switch between ``$J$-multiindex" notation and ``$\alpha$-multiindex" notation depending on which is more convenient. 
   
   \underline{Claim 1:} Let $J$ be a multiindex as above. Then
   \begin{equation}\label{eqn:claim1}
   \partial^{J}(a_1-a_2)(0,0)=0 \quad \text{for any} \,\, |J|\leq k.
   \end{equation}
   To see this, consider $(a_1-a_2)$ along the line $(\xi_2, \xi_3=m^{2}_3\xi_2, \dots, \xi_n=m^{2}_n \xi_2)=\bm^2 \xi_2$. We will show that for any $0\leq s \leq k$
   \begin{equation}\label{eqn:Jderiv}
   \partial_{\xi_2}^s(a_1-a_2)(x,\bm^2\xi_2)=\sum_{|J|=s}(\bm^2)^J \partial^{J}(a_1-a_2)(x,\bm^2\xi_2),
   \end{equation}
   and hence evaluating at $x=0,\xi_2=0$ gives
   \[
   0=\partial_{\xi_2}^s(a_1-a_2)(0,0)=\sum_{|J|=s}(\bm^2)^J \partial^{J}(a_1-a_2)(0,0) \quad \text{for any}\,\, s\leq k,
   \]
   where we used the contact condition to conclude $0=\partial_{\xi_2}^s(a_1-a_2)(0,0).$ Then, since this polynomial, $\sum_{|J|=s}(\bm^2)^J \partial^{J}(a_1-a_2)(0,0)$, in the variables $m^{2}_3,\dots,m^{2}_n$,  vanishes identically, we must have that each coefficient is zero. This implies that for $s\leq k$
   \[
   \partial^{J}(a_1-a_2)(0,0)=0 \quad \text{for any} \,\, |J|\leq s,
   \]
   which proves the claim. It remains to prove (\ref{eqn:Jderiv}). We do so with induction on $s$. First we observe that
   \[
   \partial_{\xi_2}(a_1-a_2)(x,\bm^2\xi_2)=\sum_{j=2}^n m_j^2 \left(\partial_{\xi_j}a_1-\partial_{\xi_j}a_2 \right) (x,\bm^2\xi_2)=\sum_{|J|=1}(\bm^2)^J \left(\partial^J(a_1-a_2)\right)(x,\bm^2\xi_2).
   \]
    Next, assume (\ref{eqn:Jderiv}) holds for $0\leq s < k$ and consider
   \[
    \partial_{\xi_2}^{s+1}(a_1-a_2)(x,\bm^2\xi_2)=\partial_{\xi_2}\left(\sum_{|J|=s}(\bm^2)^J \partial^{J}(a_1-a_2)(x,\bm^2\xi_2) \right)=\sum_{|J|=s}(\bm^2)^J \partial_{\xi_2}\left( \partial^{J}(a_1-a_2)(x,\bm^2\xi_2) \right).
   \]
   Moreover, using the chain rule, we have
   \[
   \partial_{\xi_2}\left( \partial^{J}(a_1-a_2)(x,\bm^2\xi_2) \right)=\sum_{\ell=2}^n m^{2}_\ell \left(\partial_{\xi_\ell} \partial^{J}(a_1-a_2)\right)(x,\bm^2\xi_2)=\sum_{|L|=1}(\bm^2)^{L} \left(\partial^L \partial^J (a_1-a_2)\right)(x,\bm^2\xi_2).
   \]
    Thus,
    \begin{multline*}
        \partial_{\xi_2}^{s+1}(a_1-a_2)(x,\bm^2\xi_2)=\sum_{|J|=s} \sum_{|L|=1}(m^2)^{J+L} \left(\partial^{J+L}(a_1-a_2)\right)(x,\bm^2\xi_2) \\=\sum_{|J|=s+1}(\bm^2)^J \left( \partial^J (a_1-a_2)\right)(x,\bm^2 \xi_2),
    \end{multline*}
    where by $J+L$ we mean $(j_1,j_2,\dots,j_s,\ell_1)$. Here, we used that $(a_1-a_2)$ is smooth and so the order of differentiation does not matter, and we can reorder and combine $\partial^L \partial^J=\partial^{J+L}$.

    \underline{Claim 2:} For any $\alpha=(\alpha_2,\dots,\alpha_n)$ we have
    \begin{equation}\label{eqn:claim2}
        \partial^\alpha (a_1-a_2)(0,0)=\partial^{\alpha}\tilde{q}(0,0).
    \end{equation}
    To see this, we compute
      \[
  \partial^{\alpha}\tilde{q}(x,\xib)=\partial^\alpha (a_1-a_2)(x_1,\xb(x_1),\xib(x_1)) \left( \frac{\partial \xi_2(x_1)}{\partial\xi_2}\right)^{\alpha_2} \left( \frac{\partial \xi_3(x_1)}{\partial\xi_3}\right)^{\alpha_3} \cdots  \left( \frac{\partial \xi_n(x_1)}{\partial\xi_n}\right)^{\alpha_n} +R_{\alpha}(x,\xib),
   \]
   where $R_{\alpha}$ is a sum of terms that have at least one factor of
   \[
   \partial^\beta x_j(x_1) \quad \text{for} \,\, |\beta|\leq |\alpha|, \qquad  \partial^\beta \xi_j(x_1) \quad \text{for} \,\, 2\leq |\beta|\leq |\alpha|, \qquad \text{or} \qquad \partial_{\xi_\ell}\xi_j(x_1) \quad \text{for} \,\, \ell \not=j
   \]
   for $j\in\{2,\dots,n\}.$  Moreover, since the flow satisfies
   \[
   \xb(x_1)=\xb+O(x_1) \quad \text{and} \quad \xib=\xib+O(x_1),
   \]
  we have $R_{\alpha}(0,\xb,\xib)=0$ and
   \[
   \frac{\partial \xi_j(x_1)}{\partial\xi_j}\Big|_{x_1=0}=1 \qquad \text{for all} \,\, j=2,\dots,n.
   \]
   Therefore $\partial^\alpha \tilde{q}(0,0)=\partial^\alpha (a_1-a_2)(0,0)$ as desired.

Now, we are ready to Taylor expand $\tilde{q}$ around $\xib=0$. Applying Taylor's Theorem with Remainder (see for example \cite[Chapter 4.2]{653Notes}) to $\tilde{q}(0,\xib)$ we have
\[
\tilde{q}(0,\xib)=\sum_{|J|\leq k+1} \frac{1}{|J|!}\partial^J \tilde{q}(0,0)\xib^J +R_{k+1}(\xib),
\]
where
\[
R_{k+1}(\xib)=\frac{1}{(k+1)!}\sum_{|J|=k+2}\left(\int_0^1 (1-s)^{k+1} \partial^J\tilde{q}(0,s\xib) ds \right)\xib^J.
\]
We can switch between the $J$ notation and the $\alpha$ notation the formula
\[
\sum_{|J|=s} \frac{1}{|J|!}\partial^J \tilde{q}(0,0)\xib^J =\sum_{|\alpha|=s} \frac{1}{\alpha!}\partial^\alpha \tilde{q}(0,0)\xib^\alpha. 
\]
Therefore, using (\ref{eqn:claim1}) and (\ref{eqn:claim2}) we have
\[
\tilde{q}(0,\xib)=\sum_{|J|\leq k+1} \frac{1}{|J|!}\partial^J (a_1-a_2)(0,0)\xib^J +R_{k+1}(\xib) =\sum_{|J|= k+1} \frac{1}{|J|!}\partial^J (a_1-a_2)(0,0)\xib^J +R_{k+1}(\xib).
\]
Next, let $q_0(\xib):=\sum_{|J|= k+1} \frac{1}{|J|!}\partial^J (a_1-a_2)(0,0)\xib^J$.  We will show that $q_0(\xib)\gtrsim |\xib|^{k+1}$. First, we make a few observations about $q_0$ and $R_{k+1}$. Since there is exactly one contact point on the support of $\chi$, we have that $a_1-a_2\geq 0$. In particular, it must be that $a_1-a_2\geq 0$ along each line of the form $\boldsymbol{0}^j\xi_j$ where by $\boldsymbol{0}^j$ we mean $(0,\dots,0,1,0,\dots,0)$, the $n-1$ tuple with $1$ in the $j$th slot an zero otherwise.
So for each $j\in\{2,\dots,n\}$ we have
\[
0\leq (a_1-a_2)(0,\boldsymbol{0}^j\xi_j)=\tilde{q}(\boldsymbol{0}^j\xi_j)=\frac{1}{(k+1)!}\partial_{\xi_j}^{k+1}(a_1-a_2)(0,0)\xi_j^{k+1}+R_{k+1}(\boldsymbol{0}^j\xi_j). 
\]
Here we see why we have assumed that $k$ is odd, as it is apparent from this inequality that $k+1$ must be even for there to be only one point of contact on the support of $\chi$. Next, we claim that for each $j$ it must be that 
\[
0 < \partial_{\xi_j}^{k+1}(a_1-a_2)(0,0). 
\]
If there were some $j$ where $ \partial_{\xi_j}^{k+1}(a_1-a_2)(0,0)=0$, then along the line $\boldsymbol{0}^j\xi_j$ we would have
\[
(a_1-a_2)(0,\boldsymbol{0}^j\xi_j)=R_{k+1}(\boldsymbol{0}^j\xi_j).
\]
However since $R_{k+1}$ is at least of order $k+2$ in $\xib$, along this line $a_1-a_2$ would at least exhibit $k+1$-th order contact, which contradicts our assumption that the contact is of $k$th order in each direction. 

%To see this observe that since the contact must be $k$th order along this line $\{\xi_2=0,\dots, \xi_{j-1}=0,\xi_{j+1}=0,\dots,\xi_n=0\}=\boldsymbol{0}^j\xi_j$ it must be that $\partial_{\xi_j}^{k+1}(a_1-a_2)(0,0)\not=0$. Moreover, we have
%\[
%|R_{k+1}(\boldsymbol{0}^j\xi_j)|\lesssim |\xi_j|^{k+2},
%\]
%so, there exists a neighbourhood of $\xib=0$ such that 
%\[
%|R_{k+1}(0^j\xi_j)|\leq \frac{1}{2(k+1)!}|\partial_{\xi_j}^{k+1}(a_1-a_2)(0,0)||\xi_j|^{k+1} \quad \text{for each } \, j=2,\dots, n.
%\]
%Thus, in this neighbourhood of $(0,0)$, we can ensure that
%\[
%0 < \partial_{\xi_j}^{k+1}(a_1-a_2)(0,0) \qquad \text{for each} \,\, j=2,\dots, n.
%\]

Next we show that there exists a constant $C>0$ such that $q_0\geq C|\xib|^{k+1}$. To show this, we will use the contact condition. We begin by showing that for each $j\in\{2,\dots,n\}$ there exists $c_j>0$ such that $q_0(\xib)\geq c_j \xi_j^{k+1}$. First we show the inequality holds along each line $\bm^j\xi_j$ where recall $\bm^j=(m^{j}_2,\dots,m^{j}_{j-1},1,m^{j}_{j+1},\dots,m^{j}_n)$ for $m^{j}_i\in\R$. Along this line 
\[
\{\xi_2=m^{j}_2\xi_j, \dots, \xi_{j-1}=m^{j}_{j-1},\xi_j, \xi_{j+1}=m^{j}_{j+1}\xi_j,\dots, \xi_n=m^{j}_n\xi_j\}
\]
we have
\[
q_0(\bm^j\xi_j)=\sum_{|\alpha|=k+1}\frac{1}{\alpha!}\partial^{\alpha}(a_1-a_2)(0,0)(\bm^j)^{\alpha}\xi_j^{|\alpha|}=\xi_j^{k+1}\sum_{|\alpha|=k+1}\frac{1}{\alpha!}\partial^{\alpha}(a_1-a_2)(0,0)(\bm^j)^{\alpha}.
\]
There cannot exist an $\bm^j$ such that $\sum_{|\alpha|=k+1}\frac{1}{\alpha!}\partial^{\alpha}(a_1-a_2)(0,0)(\bm^j)^{\alpha}=0$ as that would imply that the order of contact would be larger than $k$th order along this line $\bm^j \xi_j$. Hence $\sum_{|\alpha|=k+1}\frac{1}{\alpha!}\partial^{\alpha}(a_1-a_2)(0,0)(\bm^j)^{\alpha}\not=0$ for all $\bm^j$. Furthermore, since
\[
\partial_{\xi_j}^{k+1}(a_1-a_2)(0,0)> 0
\]
it must be that $\sum_{|\alpha|=k+1}\frac{1}{\alpha!}\partial^{\alpha}(a_1-a_2)(0,0)(\bm^j)^{\alpha}$ is positive everywhere. Therefore, this polynomial in $\bm^j$ is bounded below by some positive constant. Let $c_j>0$ denote the minimum. Then we have 
\[
q_0(\bm^j\xi_j)\geq c_j \xi_j^{k+1},
\]
for each $j=2,\dots,n$ and $\bm^j$. We want to upgrade this to $q_0(\xib)\geq c_j \xi_j^{k+1}$ for any $\xib$. Note that for each $\ell\not=j$ we also have $q_0(\bm^\ell \xi_\ell)\geq c_\ell \xi_\ell^{k+1}\geq0$. Patching these estimates together we obtain
\[
q_0(\xib)\geq c_j \xi_j^{k+1} \quad \text{for each } \,\, j=2,\dots,n. 
\]
%which implies that $q_0(\xib)\geq c_j \xi_j^{k+1}$ for any $\xib$ except those lines with $\xi_j=0$. Now we check the case where $\xi_j=0$. We need to show $q_0(\xi_2,\dots, 0,\dots, \xi_n)\geq 0$ to finish the proof of the claim. Let $\ell\not=j$. Then we have that $q_0(\xib)\geq c_\ell \xi_\ell^{k+1}$ for any $\xib$ except $\xi_\ell=0$. Particularly, this holds for $\xib$ with $\xi_j=0$. So
%\[
%q_0(\xi_2,\dots,\xi_{j-1} 0,\xi_{j+1},\dots, \xi_n)\geq c_\ell \xi_\ell^{k+1}\geq 0.
%\]
%Hence, for all $j=2,\dots,n$ there exists constant $c_j>0$ such that $q_0(\xib)\geq c_j \xi_j^{k+1}$ for all $\xib\in\R^{n-1}$. 
Moreover, this implies
\[
q_0(\xib)\geq \frac{1}{n-1}\sum_{j=2}^nc_j\xib^{k+1}\geq C |\xib|^{k+1},
\]
for some constant $C>0$. 

Finally, we have shown that
\[
\tilde{q}(0,\xib)=q_0(\xib)+R_{k+1}(\xib), \qquad \text{with} \,\, q_0(\xib)\geq C |\xib|^{k+1} \quad \text{and} \,\, |R_{k+1}(\xib)|\lesssim |\xib|^{k+2}.
\]
Therefore, in a small neighbourhood of the contact point, we can express
\[
\tilde{q}(x,\xib)=q_0(x,\xib)+R_{k+1}(x,\xib)
\]
where $q_0(x,\xib)\geq C(x)|\xib|^{k+1}$ and $C(0)>0$, and $|R_{k+1}(x,\xib)|\lesssim|\xib|^{k+2}$. Furthermore, we see that
\[
|\tilde{q}(x,\xib)|=|q_0(x,\xib)+R_{k+1}(x,\xib)|\geq |q_0(x,\xib)|-|R_{k+1}(x,\xib)|\geq C(x)|\xib|^{k+1}-\tilde{C}(x)|\xib|^{k+2},
\]
and so for $\xib$ sufficiently small (possibly shrinking the support of $\chi$ if needed) we can ensure that $\tilde{C}(x)|\xib|^{k+2}\leq \frac{1}{2}C(x)|\xib|^{k+1}$. This implies
\begin{equation*}
|\tilde{q}(x,\xib)|\geq \frac{C(x)}{2}|\xib|^{k+1}=c(x)|\xib|^{k+1},
\end{equation*}
%and therefore we can write
%\[
%\tilde{q}(x,\xib)=q_0(x,\xib)+R_{k+1}(x,\xib)\qquad \text{with} \,\, q_0(x,\xib)\geq C(x) |\xib|^{k+1} \quad \text{and} \,\, |R_{k+1}(x,\xib)|\lesssim_x |\xib|^{k+2}
%\]
where $c(0)>0$ as claimed.
  
\end{proof}

%Note that for $|\xib|\leq h^{\frac{1}{k+1}}$ we have $|R_{k+1}(x,\xib)|\lesssim h^{\frac{k+2}{k+1}}$ and so in this region $v_h$ is a quasimode of $q_0(x, hD_{\xb})$. 

\section{Proof of Theorem \ref{thrm:linfcontact}}
This section presents the proof of Theorem \ref{thrm:linfcontact}.  To prove the $L^p$ estimates on $u_h$, we utilise that $u_h=W^*(x_1)v_h$ where $v_h$ is a quasimode for $hD_{x_1}$ and $\tilde{q}(x,hD_{\xb})$, and that $\tilde{q}(x,\xib)\gtrsim |\xib|^{k+1}$. Since $v_{h}$ is a quasimode for two such simple equations we are able to decompose $v_{h}$ very efficiently. As in \cite{TacyContact}, we will use a combination of wavelet and Fourier decompositions. The wavelet decomposition will take place in the $x_{1}$ variable. Wavelet decomposition is particularly suitable for our application. With a wavelet decomposition one writes a function of a superposition of dilations and translations of a ``mother-wavelet'' $f$. That is in terms of functions $f_{a,b}$ given by
\[f_{a,b}(t)=|a|^{-1/2}f\left(\frac{t-b}{a}\right)\quad a,b\in\R, \,\, a\not=0.\]
For our purposes, we will assume that the mother wavelet $f$ is smooth and has compactly supported Fourier transform. Moreover, we will have that
$$\int f(t)dt=0.$$
The wavelet transform
\[g \mapsto X_{g}(a,b)=\int |a|^{-1/2}f\left(\frac{t-b}{a}\right)g(t)dt\]
gives the ``wavelet coefficients'' in much the same way that the Fourier transform gives the Fourier coefficients for the expression of a function in terms of plane waves. Compared to the Fourier transform, the wavelet transform allows for both localisation in space and frequency, up to the limits placed by the uncertainty principle. Since $v_{h}$ obeys $hD_{x_{1}}v_{h}=O_{L^{2}}(h)$ we expect that $v_{h}$ is roughly constant in $x_{1}$. So, if $|a|\ll 1$, then we would expect that
\[X_{v_h(\cdot,\bar{x})}(a,b)=\int \frac{1}{|a|^{1/2}}f\left(\frac{x_{1}-b}{a}\right)v(x_{1},\bar{x})dx_{1}\approx C\int \frac{1}{|a|^{1/2}}f\left(\frac{x_{1}-b}{a}\right)dx_{1}=C|a|^{1/2}\int f(y) dy=0.\]
Hence the wavelet coefficients for $|a|\ll 1$ will be small. On the other hand, if $|a|\gg 1$, the fact that $v_{h}$ is localised to an $O(1)$ region will ensure that
\[ \int \frac{1}{|a|^{1/2}}f\left(\frac{x_{1}-b}{a}\right)v(x_{1},\bar{x})dx_{1}\approx |a|^{-1/2}\]
is small. Therefore the main contributions to $v_{h}$ come from the wavelets with $|a|\approx 1$ (so the decomposition is very sparse). The wavelet decomposition only uses the fact that $v_{h}$ is a quasimode of $hD_{x_{1}}$. To incorporate the fact that $v_{h}$ is also a quasimode of $\tilde{q}(x,hD_{\bar{x}})$ we take a Fourier transform in the $\bar{x}$ variables. From the ellipticity estimate
\begin{equation}\tilde{q}(x,\bar{\xi})\gtrsim|\xib|^{k+1}\label{eqn:elliptic}\end{equation}
we will find that the main contribution to $v_{h}$ comes from $\xib\approx 0$. Having obtained an efficient decomposition for $v_{h}$ we then apply $W^*(x_{1})$ to extract an expression for $u_{h}$. The $L^{p}$ estimates on $u_{h}$ follow from the composition of $W^*(x_{1})$ with the wavelet-frequency decomposition. We will abuse terminology somewhat and refer to the composition of $W^*(x_{1})$ with the wavelet-frequency decomposition of $v_{h}$ as a wavelet-frequency decomposition for $u_{h}$

We begin in Section \ref{sec:decomp} by producing such a wavelet-frequency decomposition for $u_{h}$.  Guided by the ellipticity estimate \ref{eqn:elliptic} and the wavelet heuristics discussed above will express the operator $W^{*}(x_1)$ in terms of operators $W_{a,j}^{*}(x_{1})$ where $W_{a,j}^{*}(x_{1})$ represents the contribution to $W^{*}(x_{1})$ from the wavelets $\{f_{a,b},b\in\R\}$ and frequencies $|\xib|\approx 2^{j}h^{\frac{1}{k+1}}$. Next, in Section \ref{sec:lpest}, we present two key lemmas, Lemma \ref{lem:ftterm} and Lemma \ref{lem:W*term}, needed to obtain the $L^p$ estimates on $u_h$. In Lemma \ref{lem:ftterm}, we control the mass of frequency localised wavelets, this allows us to see that the main contributions to $u_{h}$ come from the terms $W^{*}_{a,j}$ where $|a|\approx 1$ and $j\approx 0$. In Lemma \ref{lem:W*term}, we use a ``$TT^*$" argument to obtain estimates on the $W_{a,j}(x_1)$ operator. Assuming these lemmas, we prove Theorem \ref{thrm:linfcontact}. Lastly, in Section \ref{sec:lemmas}, we present the proofs of Lemma \ref{lem:ftterm} and Lemma \ref{lem:W*term}.

\subsection{A wavelet and frequency decomposition}\label{sec:decomp}
 We begin by exploiting the structure of $W^*$. Recall
\begin{align*}
u_h(x_1,\xb)=W^*(x_1)v_h(x_1,\yb) &=\frac{1}{(2\pi h)^{n-1}}\int e^{\frac{i}{h} \big(\varphi_1(x_1,\xb,\xib)-\langle \yb, \xib \rangle \big)}b^h(x,\xib)v_h(x_1,\yb)d\yb \, d\xib. %\\
%&=\frac{1}{(2\pi h)^{\frac{n-1}{2}}} \int e^{\frac{i}{h} \varphi_1(x_1,\xb,\xib)}b^h(x,\xib) \Fh[v_h(x_1,\cdot)](\xib)  d\xib.
\end{align*}
Using the Fourier transform, we have
\[
\Fh[v_h(x_1,\cdot)](\xib)=\frac{1}{(2\pi h)^{\frac{n-1}{2}}}\int e^{-\frac{i}{h}\langle \yb,\xib \rangle}v_h(x_1,\yb)d\yb,
\]
and so we can rewrite
\[
u_h(x_1,\xb)=\frac{1}{(2\pi h)^{\frac{n-1}{2}}} \int e^{\frac{i}{h} \varphi_1(x_1,\xb,\xib)}b^h(x,\xib) \Fh[v_h(x_1,\cdot)](\xib)  d\xib.
\]
Next, to use that $v_h$ is a quasimode of $hD_{x_1}$, we decompose $v_h$ into wavelets using a continuous wavelet transform in $x_1$. Let $f\in\cci(\R)$ denote the mother wavelet, which satisfies
\[
C_f:=\int \frac{|\hat{f}(\eta)|^2}{|\eta|}d\eta<\infty.
\]
Moreover, since $f\in\cci(\R)$, we know $\hat{f}$ is continuous, and using that $C_f<\infty$, we have that $\hat{f}(0)=0$. Thus $f$ must satisfy $\int f(t)dt=0$.
We can write $v_h$ as a superposition of wavelets $X_{v_h}$,
\[
v_h(x_1,\yb)=\frac{1}{C_f}\int \frac{1}{|a|^{5/2}}X_{v_h}(a,b,\yb) f\left(\frac{x_1-b}{a} \right) da \, db,
\]
where
\begin{equation}\label{eqn:X_vdef}
X_{v_h}(a,b,\yb)=\frac{1}{|a|^{1/2}}\int f\left( \frac{x_1-b}{a}\right)v_h(x_1,\yb) dx_1.
\end{equation}
Therefore,
\begin{align}
u_h(x)&=\frac{1}{C_f \, (2\pi h)^{\frac{n-1}{2}}}\int \frac{1}{|a|^{5/2}} e^{\frac{i}{h} \varphi_1(x_1,\xb,\xib)}b^h(x,\xib) f\left(\frac{x_1-b}{a} \right) \Fh[X_{v_h}(a,b,\cdot)](\xib) d\xib da db \nonumber \\
&=:\frac{1}{C_f} \int \frac{1}{|a|^{5/2}} W^{*}_a(x_1)\Fh[X_{v_h}(a,b,\cdot)](\xib) da,
\end{align}
where
\[
W^{*}_a(x_1)g(b,\xib):=\frac{1}{(2\pi h)^{\frac{n-1}{2}}}\int e^{\frac{i}{h} \varphi_1(x_1,\xb,\xib)}b^h(x,\xib) f\left(\frac{x_1-b}{a} \right) g(b,\xib) db \, d\xib.
\]
Next, we want to incorporate the property that $v_h$ is a quasimode for $\tilde{q}(x,hD_{\xb})$, where $\tilde{q}(x,\xib) \gtrsim |\xib|^{k+1}$. Now since $v_h$ satisfies
\[
\tilde{q}(x,hD_{\xb})v_h=O_{L^2}(h\|v_h\|_{L^2}), 
\]
we know that
\[
\|\tilde{q}(x,\xib)\Fh[v_h]\|_{L^2}\lesssim h\|\Fh[v_h]\|_{L^2}.
\]
Furthermore, using the lower bound on $\tilde{q}$ we have
\[
C(x)|\xib|^{k+1}\Fh[v_h]=O_{L^2}(h\|\Fh[v_h]\|_{L^2}).
\]
Therefore, we expect $\Fh[v_h]$ to be concentrated near $|\xib|\lesssim h^{\frac{1}{k+1}}$. To utilise this localisation in frequency, we will dyadically decompose $W^{*}_a(x_1)$ near $|\xib|\approx h^{\frac{1}{k+1}}$. To do so, let $\psi_0\in\cci(\R)$ be supported in $[-2,2]$ and $\psi\in\cci(\R)$ be supported $[1/2,3/2]$ such that on the support of $\chi$ we have
\[
1=\left(\psi_0(h^{-\frac{1}{k+1}}|\xib|)+\sum_{j=1}^J \psi(2^{-j} h^{-\frac{1}{k+1}} |\xib|) \right) 
\]
for $2^J h^{\frac{1}{k+1}}=1$. We denote 
\[
\psi_{j}(\xib)=\begin{cases} \psi_0(h^{-\frac{1}{k+1}}|\xib|) & j=0 \\ \psi(2^{-j} h^{-\frac{1}{k+1}} |\xib|) & j\geq 1
\end{cases}.
\]  
Then we define 
\begin{equation}\label{eqn:wajl}
W^{*}_{a,j}(x_1)g(b,\xib):=\frac{1}{2\pi h}\int e^{\frac{i}{h} \varphi_1(x_1,\xb,\xib)}b^h(x,\xib) f\left(\frac{x_1-b}{a} \right) \sqrt{\psi_{j}(\xib)} g(b,\xib) db \, d\xib,
\end{equation}
and write $u_h$ in terms of these operators as
\begin{equation}\label{eqn:udecomp}
u_h(x)=\sum_{j=0} \frac{1}{C_f} \int \frac{1}{|a|^{5/2}} W^{*}_{a,j}(x_1)\left[\sqrt{\psi_{j}(\xib)}\Fh[X_{v_h}(a,b,\cdot)](\xib)\right] da.
\end{equation}

\subsection{Obtaining the $L^p$ estimates}\label{sec:lpest} Let $p'$ satisfy $\frac{1}{p}+\frac{1}{p'}=1$.
Using the dual formulation of the $L^p$ norm, and the representation of $u_h$  from (\ref{eqn:udecomp}) we have,
\begin{align}
    \|u_h\|_{L^p}&=\sup_{\|G\|_{L^{p'}}\leq 1}\langle u_h(x_1,\xb),G(x_1,\xb) \rangle_{L^2_x} =\sup_{\|G\|_{L^{p'}}\leq 1} \int \langle u_h(x_1,\xb),G(x_1,\xb) \rangle_{L^2_{\xb}} dx_1 \nonumber\\
    &=\sup_{\|G\|_{L^{p'}}\leq 1} \int \left\langle \sum_{j=0} \frac{1}{C_f} \int \frac{1}{|a|^{5/2}} W^{*}_{a,j}(x_1)\left[\sqrt{\psi_{j}(\xib)}\Fh[X_{v_h}(a,b,\cdot)](\xib)\right] da, G(x_1,\xb) \right\rangle_{L^2_{\xb}} dx_1. \nonumber
    \end{align}
Interchanging the integrals and using the $W_{a,j}(x_1)$ operators we have
\begin{align}
    \|u_h\|_{L^p} &= \sup_{\|G\|_{L^{p'}}\leq 1} \frac{1}{C_f} \sum_{j=0} \int \frac{1}{|a|^{5/2}} \left\langle \sqrt{\psi_{j}(\xib)}\Fh[X_{v_h}(a,b,\cdot)](\xib) , \int  W_{a,j}(x_1)G(x_1,\xb) dx_1 \right\rangle_{L^2_{b,\xib}}  da\nonumber \\
    &\leq  \sup_{\|G\|_{L^{p'}}\leq 1} \frac{1}{C_f} \sum_{j=0} \int \frac{1}{|a|^{5/2}} \left\|  \sqrt{\psi_{j}(\xib)}\Fh[X_{v_h}(a,b,\cdot)](\xib) \right\|_{L^2_{b,\xib}} \left\| \int  W_{a,j}(x_1)G(x_1,\xb) dx_1\right\|_{L^2_{b,\xib}} da. \label{eqn:lpdual}
\end{align}
Therefore, to complete the proof of Theorem \ref{thrm:linfcontact} we first must obtain estimates on the $L^2$ norms of $\sqrt{\psi_{j}}\Fh[X_{v_h}]$ and $\int W_{a,j}(x_1)G dx_1$. Such estimates are presented in the following two lemmas.
\begin{lemma}\label{lem:ftterm}
    Let $X_{v_h}$ be as defined in \ref{eqn:X_vdef}, where $v_h$ is a strong joint quasimode of order $h$ for $hD_{x_1}$ and $\tilde{q}(x,hD_{\xb})$, and $\tilde{q}(x,\xib)$ satisfies (\ref{eqn:qstructure}).  Then, for any $M\in \N$
    \[
    \|\sqrt{\psi_{j}}\Fh[X_{v_h}] \|_{L^2_{\xib,b}} \lesssim \frac{\|v_h\|_{L^2_x}}{2^{j(k+1)M}} \times  \begin{cases} |a|^{3/2} & |a|\leq 1 \\1 & |a|\geq 1  \end{cases}.
    \]
    \end{lemma}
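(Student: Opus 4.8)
The plan is to exploit the two quasimode properties of $v_h$ separately: the $hD_{x_1}$ quasimode property will control the $a$-dependence through a cancellation/integration-by-parts argument in the wavelet variable, while the $\tilde{q}(x,hD_{\xb})$ quasimode property — combined with the ellipticity bound $\tilde{q}(x,\xib)\gtrsim|\xib|^{k+1}$ from \eqref{eqn:qstructure} — will produce the factor $2^{-j(k+1)M}$ coming from the frequency localisation to $|\xib|\approx 2^j h^{\frac{1}{k+1}}$. First I would observe that since $v_h$ is a strong quasimode of $\tilde{q}(x,hD_{\xb})$ of order $h$, for any $N$ we have $\|\tilde{q}(x,hD_{\xb})^N v_h\|_{L^2}\lesssim h^N\|v_h\|_{L^2}$; applying the ellipticity estimate \eqref{eqn:qstructure} on the support of $\chi$ (and using that $v_h$ is microlocalised there, inherited from $u_h$), the symbol $\tilde{q}(x,\xib)^N$ is bounded below by $c(x)^N|\xib|^{(k+1)N}$, so on the frequency band $|\xib|\approx 2^j h^{\frac{1}{k+1}}$ one gains a factor $(2^j h^{\frac{1}{k+1}})^{(k+1)N} = 2^{j(k+1)N} h^{N}$. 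Dividing by the lower bound and absorbing the $h^N$ against the quasimode bound $h^N$ leaves the clean factor $2^{-j(k+1)N}$, which with $N=M$ gives the stated power. Making this rigorous requires an elliptic parametrix argument: I would construct $E_j(x,hD_{\xb})$, microlocally supported where $|\xib|\approx 2^j h^{\frac{1}{k+1}}$, with $E_j \tilde{q}(x,hD_{\xb})^M = \psi_j(hD_{\xb}) + O_{L^2\to L^2}(h^\infty)$, whose symbol obeys $|E_j|\lesssim (2^j h^{\frac{1}{k+1}})^{-(k+1)M}$ together with the corresponding derivative bounds, so that $\|\psi_j(hD_{\xb})v_h\|_{L^2}\lesssim 2^{-j(k+1)M}\|v_h\|_{L^2}$.

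Second, I would handle the $a$-dependence using that $v_h$ is a strong quasimode of $hD_{x_1}$. For $|a|\le 1$: because $\int f=0$, writing $f = \frac{d}{dt}F$ for a compactly supported primitive $F$ and integrating by parts in $x_1$ in \eqref{eqn:X_vdef},
\[
X_{v_h}(a,b,\yb) = |a|^{1/2}\int F\!\left(\frac{x_1-b}{a}\right)\partial_{x_1}v_h(x_1,\yb)\,dx_1 = \frac{|a|^{1/2}}{h}\int F\!\left(\frac{x_1-b}{a}\right) (hD_{x_1}v_h)(x_1,\yb)\,dx_1 \cdot \frac{1}{i},
\]
so that, by Cauchy--Schwarz in $x_1$ (the $F$ factor contributing $|a|^{1/2}$) and then integrating $\|hD_{x_1}v_h(\cdot,\yb)\|^2$ over $b$ and $\yb$ and using $\|hD_{x_1}v_h\|_{L^2}\lesssim h\|v_h\|_{L^2}$, one gets a gain of $|a|$ from the two explicit $|a|^{1/2}$ factors and the $h$ in the denominator cancels against the quasimode bound, yielding $\|X_{v_h}(a,\cdot,\cdot)\|_{L^2_{b,\yb}}\lesssim |a|^{3/2}\|v_h\|_{L^2}$. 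For $|a|\ge 1$, instead I would use the crude bound from the compact spatial localisation of $v_h$: $\|X_{v_h}(a,\cdot,\cdot)\|_{L^2_{b,\yb}}\lesssim \|v_h\|_{L^2}$ directly from Cauchy--Schwarz and the $L^2$-boundedness of the wavelet transform (the normalisation $|a|^{-1/2}$ times the $O(1)$ measure of the $x_1$-support keeps things bounded). Since the frequency cutoff $\sqrt{\psi_j(\xib)}$ and the Fourier transform $\Fh$ commute with the $x_1$-integration defining $X_{v_h}$, the two gains multiply: $\|\sqrt{\psi_j}\,\Fh[X_{v_h}]\|_{L^2_{\xib,b}} \lesssim 2^{-j(k+1)M}\|v_h\|_{L^2}\times(|a|^{3/2}$ or $1)$.

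The main obstacle I anticipate is making the interaction between the two decompositions clean — in particular ensuring that applying the frequency cutoff $\psi_j(hD_{\xb})$ and then extracting the $2^{-j(k+1)M}$ gain is legitimate \emph{after} the wavelet transform in $x_1$ has been taken. The point is that $X_{v_h}(a,b,\cdot)$ is, for each fixed $(a,b)$, a superposition of slices $v_h(x_1,\cdot)$ and hence inherits the microlocalisation and the quasimode property of $\tilde q$ in an $x_1$-averaged sense; one must check that the parametrix construction for $\tilde q(x,hD_{\xb})^M$ is uniform in $x_1$ over the (compact) support of $\chi$ so that the frequency gain survives the $x_1$-integration. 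A secondary technical point is that $\tilde{q}$ depends on $x_1$, so $\psi_j(hD_{\xb})$ does not exactly commute with $\tilde{q}(x,hD_{\xb})^M$; but the commutator is $O(h)$ relative to the main term and, after $M$ steps, contributes only lower-order corrections that are easily absorbed, exactly as in the strong-quasimode bookkeeping carried out in the proof of Lemma \ref{lem:vjquasi}. Once these uniformity issues are dispatched, the estimate is the product of the two one-variable bounds as above.
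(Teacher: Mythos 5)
Your proposal is essentially correct and follows the same two-ingredient strategy as the paper: the ellipticity $\tilde{q}(x,\bar\xi)\gtrsim|\bar\xi|^{k+1}$ restricted to the dyadic band $|\bar\xi|\approx 2^j h^{\frac{1}{k+1}}$ gives the factor $2^{-j(k+1)M}$ (via the strong quasimode bound $\|\tilde q^M v_h\|\lesssim h^M\|v_h\|$), while the vanishing mean of $f$ (writing $f$ as a derivative and integrating by parts in $x_1$) combined with $\|hD_{x_1}v_h\|\lesssim h\|v_h\|$ gives the $|a|^{3/2}$ for $|a|\leq 1$, with a crude $L^2$ bound for $|a|\geq 1$. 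The only methodological difference is cosmetic: you propose building an elliptic parametrix $E_j$ with $E_j\tilde q^M=\psi_j(hD_{\bar x})+O(h^\infty)$, whereas the paper avoids any parametrix construction and simply writes $\sqrt{\psi_j}=\tfrac{\sqrt{\psi_j}}{\tilde q^M}\cdot\tilde q^M$ on the Fourier side and takes the $\sup$ of the prefactor on $\mathrm{supp}\,\psi_j$, which is $\lesssim(h2^{j(k+1)})^{-M}$. The direct division is lighter and sidesteps the symbolic calculus (whose effective semiclassical parameter $h^{1-\frac{2}{k+1}}2^{-2j}$ is borderline for small $j$ and $k=1$), so it is worth noting that a full parametrix is more than is needed here. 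Two small bookkeeping points you should address explicitly when writing this up: (i) the $j=0$ piece must be treated separately (there $\tilde q$ vanishes on $\mathrm{supp}\,\psi_0$, so no ellipticity gain is available — but none is needed since $2^{-j(k+1)M}=1$; one just uses $L^2$-boundedness of $\psi_0$); and (ii) for $|a|\leq 1$ the combination of the $\tilde q^M$ gain with the $hD_{x_1}$ integration-by-parts must be run simultaneously on $v_h$ (i.e. one bounds $\|\tilde q^M(x,hD_{\bar x})(hD_{x_1})v_h\|\lesssim h^{M+1}\|v_h\|$ using the strong \emph{joint} quasimode hypothesis, exactly as in the paper), rather than multiplying two separate estimates — your last paragraph gestures at this but it is the crucial step and should be made explicit.
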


\begin{lemma}\label{lem:W*term}
    Let $\delta(n,p,k)$ and $W_{a,j}^*(x_1)$ be as defined in (\ref{eqn:delta}) and (\ref{eqn:wajl}) respectively. Let $\frac{1}{p}+\frac{1}{p'}=1$ and $2\leq p\leq \infty$. Then for $|a|\leq 1$ we have
   % \[
% \left\| \int W_{a,j}(x_1)G(x_1,\xb) dx_1 \right\|_{L^2_{b,\xib}} \lesssim |a|^{1/2}\|G\|_{L^{p'}_x} \begin{cases}
       %     h^{-\frac{n-1}{2}\left(\frac{1}{2}-\frac{1}{p} \right)} & 2\leq p \leq \frac{2(n+1)}{n-1} \\
           %  h^{-\frac{n-1}{2}\left(1-\frac{1}{k+1} \right)+\frac{n}{p}-\frac{(n+1)}{p(k+1)}} 2^{j\left( \frac{n-1}{2}-%\frac{n+1}{p}\right)} & \frac{2(n+1)}{n-1}\leq p\leq \infty 
    % \end{cases},
  %  \]
       \[
 \left\| \int W_{a,j}(x_1)G(x_1,\xb) dx_1 \right\|_{L^2_{b,\xib}} \lesssim h^{-\delta(n,p,k)}|a|^{1/2}\|G\|_{L^{p'}_x} \begin{cases}
            1 & 2\leq p \leq \frac{2(n+1)}{n-1} \\
              2^{j\left( \frac{n-1}{2}-\frac{n+1}{p}\right)} & \frac{2(n+1)}{n-1}\leq p\leq \infty 
     \end{cases},
    \]
    and for $|a|\geq 1$ we have
    %    \[
  % \left\| \int W_{a,j}(x_1)G(x_1,\xb) dx_1 \right\|_{L^2_{b,\xib}} \lesssim \|G\|_{L^{p'}_x} \begin{cases}
  %        |a|^{\frac{1}{2}+\frac{1}{p}-\frac{n-1}{2}\left(\frac{1}{2}-\frac{1}{p} \right)} h^{-\frac{n-1}{2}\left(\frac{1}{2}-\frac{1}{p} \right)} & 2\leq p \leq \frac{2(n+1)}{n-1} \\ 
    %       |a|^{1/2} h^{-\frac{n-1}{2}\left(1-\frac{1}{k+1} \right)+\frac{n}{p}-\frac{(n+1)}{p(k+1)}} 2^{j\left( \frac{n-1}{2}-\frac{n+1}{p}\right)}  & \frac{2(n+1)}{n-1}\leq p\leq \infty 
    % \end{cases}.
   % \]
            \[
   \left\| \int W_{a,j}(x_1)G(x_1,\xb) dx_1 \right\|_{L^2_{b,\xib}} \lesssim h^{-\delta(n,p,k)}|a|^{1/2} \|G\|_{L^{p'}_x} \begin{cases}
          |a|^{\frac{1}{p}-\frac{n-1}{2}\left(\frac{1}{2}-\frac{1}{p} \right)} & 2\leq p \leq \frac{2(n+1)}{n-1} \\ 
           2^{j\left( \frac{n-1}{2}-\frac{n+1}{p}\right)}  & \frac{2(n+1)}{n-1}\leq p\leq \infty 
     \end{cases}.
    \]
\end{lemma}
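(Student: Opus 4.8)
**Proof proposal for Lemma \ref{lem:W*term}.**

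The plan is to carry out a $TT^*$ argument on the operator $W_{a,j}(x_1)$ after integrating in $x_1$. Write $\mathcal{W}_{a,j}G(b,\xib) := \int W_{a,j}(x_1) G(x_1,\xb)\, dx_1$; then $\|\mathcal{W}_{a,j}G\|_{L^2_{b,\xib}}^2 = \langle \mathcal{W}_{a,j}^* \mathcal{W}_{a,j} G, G\rangle$, so by H\"older it suffices to bound the $L^{p'}_x \to L^p_x$ operator norm of the kernel
\[
K_{a,j}(x,\tilde x) = \frac{1}{2\pi h}\int e^{\frac{i}{h}(\varphi_1(x_1,\xb,\xib) - \varphi_1(\tilde x_1, \tilde{\xb}, \xib))} b^h(x,\xib)\overline{b^h(\tilde x,\xib)} f\!\left(\tfrac{x_1-b}{a}\right) f\!\left(\tfrac{\tilde x_1 - b}{a}\right) \psi_j(\xib)\, d\xib\, db.
\]
First I would integrate in $b$: since $f$ is compactly supported, the $b$-integral is $O(|a|)$ and forces $|x_1 - \tilde x_1| \lesssim |a|$. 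What remains is, for each fixed pair $(x_1,\tilde x_1)$ in that diagonal strip, an oscillatory integral in $\xib$ over the dyadic shell $|\xib|\approx 2^j h^{1/(k+1)}$ (or the ball, for $j=0$), with phase $\varphi_1(x_1,\xb,\xib) - \varphi_1(\tilde x_1,\tilde\xb,\xib)$. The key geometric input is that $\varphi_1$ solves the eikonal equation \eqref{eqn:phi1def} associated to the curved symbol $a_1$; curvature of the characteristic set of $p_1$ (hypothesis (4)) means the phase has a non-degenerate Hessian in the relevant $\xib$ directions, so stationary phase / the standard oscillatory-integral estimates of Stein–Tomas / H\"ormander type apply on the shell. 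This is exactly the mechanism producing Sogge's exponent $\delta(n,p)$ from \eqref{eqn:sogge}; here the shell has radius $2^j h^{1/(k+1)}$ rather than $O(1)$, and tracking that scaling through the oscillatory-integral estimate yields the extra factor $2^{j(\frac{n-1}{2}-\frac{n+1}{p})}$ in the high-$p$ range and the critical behaviour at $p = \frac{2(n+1)}{n-1}$.

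Concretely, I would proceed as follows. Rescale $\xib = 2^j h^{1/(k+1)}\eta$ so the integration is over a fixed annulus $|\eta|\approx 1$; the measure contributes $(2^j h^{1/(k+1)})^{n-1}$ and the phase becomes $2^j h^{1/(k+1)}\Phi(x_1,\xb,\tilde x_1,\tilde\xb,\eta)/h$ plus lower-order corrections, where $\Phi$ inherits non-degeneracy from the curvature hypothesis. Applying the fixed-frequency $L^{p'}\to L^p$ estimate for such oscillatory integral operators (the semiclassical analogue of Sogge's estimate, as in \cite{KTZ} or \cite{TacyTrans}) at effective frequency parameter $\lambda = 2^j h^{1/(k+1)-1} = 2^j h^{-k/(k+1)}$ gives the bound in terms of $\lambda$, and one checks that $h^{-\delta(n,p,k)}$ is precisely $\lambda^{\delta(n,p)}$ evaluated at $j=0$, with the residual $j$-dependence collapsing to the stated dyadic factors. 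The factor $|a|^{1/2}$ out front comes from combining the $O(|a|)$ gain in the $b$-integration with the $|a|^{-1/2}$ in the normalisation of $W_{a,j}^*$ (from the $|a|^{-1/2}$ in $f_{a,b}$) and the diagonal-strip localisation; for $|a|\geq 1$ the function $f((x_1-b)/a)$ is spread over a region of size $|a|$, which degrades the $x_1$-localisation and is what produces the additional $|a|^{\frac{1}{p}-\frac{n-1}{2}(\frac12-\frac1p)}$ loss in the low-$p$ range (a Bernstein/Sobolev-type loss from integrating a non-concentrated bump over an interval of length $|a|$).

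The main obstacle I anticipate is making the oscillatory-integral estimate uniform in the dyadic shell index $j$ and in the pair $(x_1,\tilde x_1)$ simultaneously, in particular handling the boundary/transition case $j=0$ (ball rather than annulus, where there is no lower bound on $|\xib|$ and the rescaling argument must be adjusted) and verifying that the corrections to the rescaled phase $\Phi$ — coming from the non-homogeneity of $\varphi_1$ and from $a_1$ not being exactly quadratic — remain genuinely lower order on the shell, so that non-degeneracy is not destroyed. A secondary technical point is justifying the interchange of the $x_1$-integral with the oscillatory $\xib$-integral and controlling the amplitude $b^h$ and its derivatives uniformly; these are routine given compact microlocalisation but need to be stated carefully. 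I would isolate the clean high-$p$ estimate first, since there the annulus is genuinely non-degenerate and the scaling is transparent, and then interpolate or argue separately at and below the critical exponent.
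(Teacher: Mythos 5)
Your high-level plan — a $TT^*$ argument on $\int W_{a,j}(x_1)G\,dx_1$, integrating out $b$ to extract the factor $|a|$ and a diagonal constraint $|x_1-\tilde x_1|\lesssim a$, and then exploiting the curvature of $a_1$ to control the $\xib$-oscillatory integral — matches the paper's framework. But the central technical point is absent from your sketch, and the exponent you claim to recover does not actually come out of the rescaling you propose.

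The key issue is that the two-point phase
$\Phi=\varphi_1(x_1,\xb,\xib)-\varphi_1(z_1,\bar z,\xib)$
has $\xib$-Hessian proportional to $(x_1-z_1)\,\partial_{\xib}^2 a_1$ near the stationary set, so $|\det\partial_{\xib}^2\Phi|\gtrsim |x_1-z_1|^{n-1}$, and the Hessian degenerates as $x_1\to z_1$. You cannot therefore apply a Stein--Tomas/Sogge-type oscillatory integral estimate "on the shell" as a black box with $\lambda=2^jh^{-k/(k+1)}$: there is no uniform non-degeneracy in $\xib$. The paper resolves this by getting an explicit $L^1\to L^\infty$ kernel bound, via a Van der Corput lemma adapted to the $h$-dependent cutoff $\psi_j$, that is $|a|\,h^{-\frac{n-1}{2}}|x_1-z_1|^{-\frac{n-1}{2}}$ when $|x_1-z_1|\geq h^{1-\frac{2}{k+1}}2^{-2j}$ and the trivial volume bound otherwise; this is interpolated with the $L^2\to L^2$ bound $|a|$ (coming from unitarity of $W^*$), and the resulting $L^{p'}\to L^p$ kernel is then fed through Young's inequality / Hardy--Littlewood--Sobolev in the $(x_1,z_1)$ integral. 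That last step is where the $p$-dependence of the $|a|$ power for $|a|\geq 1$ arises, and it is also what makes the small-separation region contribute; you do not mention it at all.

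Moreover, your claim that ``$h^{-\delta(n,p,k)}$ is precisely $\lambda^{\delta(n,p)}$ evaluated at $j=0$'' is false for finite $p$. With $\lambda=2^jh^{-k/(k+1)}$ and $\delta(n,p)=\frac{n-1}{2}-\frac{n}{p}$ (high $p$), the rescaling heuristic gives exponent $\frac{k}{k+1}\bigl(\frac{n-1}{2}-\frac{n}{p}\bigr)$ in $h^{-1}$ and dyadic factor $2^{j(\frac{n-1}{2}-\frac{n}{p})}$, whereas the lemma asserts $\delta(n,p,k)=\frac{k}{k+1}\bigl(\frac{n-1}{2}-\frac{n}{p}\bigr)-\frac{1}{p(k+1)}$ and dyadic factor $2^{j(\frac{n-1}{2}-\frac{n+1}{p})}$. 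The extra gain $h^{\frac{1}{p(k+1)}}2^{-j/p}$ is precisely what the trivial-bound regime $|x_1-z_1|\leq h^{1-\frac{2}{k+1}}2^{-2j}$ contributes through the $L^{p/2}$ norm of the short indicator in Young's inequality; a naive rescaling that treats the shell as a Sogge-type surface cannot produce it. So as written the argument would land on a weaker (and in particular non-matching) exponent, and the step that would fix it — the split of the kernel at the Van der Corput transition scale and the subsequent $x_1$-convolution estimate — is exactly the missing piece.
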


We first prove Theorem \ref{thrm:linfcontact} by combining the estimates from Lemma \ref{lem:ftterm} and Lemma \ref{lem:W*term}. We delay the proof of these lemmas for Section \ref{sec:lemmas}.

 \begin{proof}[Proof of Theorem \ref{thrm:linfcontact}]
We showed in (\ref{eqn:lpdual}) that we could control
\begin{equation}\label{eqn:upbdd}
\|u_h\|_{L^p} \leq \sup_{\|G\|_{L^{p'}}\leq 1} \frac{1}{C_f} \sum_{j=0} \int \frac{1}{|a|^{5/2}} \left\|  \sqrt{\psi_{j}(\xib)}\Fh[X_{v_h}(a,b,\cdot)](\xib) \right\|_{L^2_{b,\xib}} \left\| \int  W_{a,j}(x_1)G(x_1,\xb) dx_1\right\|_{L^2_{b,\xib}} da.
\end{equation}
We decompose the integral in $a$ into two pieces, $|a|\leq 1$ and $|a|\geq 1$. First, we consider the contributions from the interval $|a|\leq 1$. 
Applying Lemma \ref{lem:ftterm} and Lemma \ref{lem:W*term} for $|a|\leq 1$ we have for any $M\in\N$ that
\begin{align}
     \int_{|a|\leq 1} \frac{1}{|a|^{5/2}} & \left\|  \sqrt{\psi_{j}(\xib)}\Fh[X_{v_h}(a,b,\cdot)](\xib) \right\|_{L^2_{b,\xib}} \left\| \int  W_{a,j}(x_1)G(x_1,\xb) dx_1\right\|_{L^2_{b,\xib}} da \nonumber \\
     &\leq   \int_{|a|\leq 1} \frac{1}{|a|^{5/2}} \frac{\|v_h\|_{L^2}|a|^{3/2}}{2^{j(k+1)M}} |a|^{1/2} \|G\|_{L^{p'}} h^{-\delta(n,p,k)} 2^{\mu(j,n,p)} da \nonumber \\
     &= \|v_h\|_{L^2} h^{-\delta(n,p,k)}  2^{\mu(j,n,p)-j(k+1)M}  \|G\|_{L^{p'}}   \int_{|a|\leq 1} |a|^{-1/2} da \nonumber \\
     &\lesssim \|v_h\|_{L^2} h^{-\delta(n,p,k)}  2^{\mu(j,n,p)-j(k+1)M/2}  \|G\|_{L^{p'}}, 
 \label{eqn:thrma<1}
\end{align}
where $\delta(n,p,k)$ is as defined in (\ref{eqn:delta}) and
\[
\mu(j,n,p):=\begin{cases}
    0 & 2\leq p \leq \frac{2(n+1)}{n-1} \\
   j\left( \frac{n-1}{2}-\frac{n+1}{p} \right) & \frac{2(n+1)}{n-1} \leq p \leq \infty
\end{cases}.
\]
On the other hand, applying Lemma \ref{lem:ftterm} and Lemma \ref{lem:W*term} for $|a|\geq1,$ we have
\begin{align}
     \int_{|a|\geq1} \frac{1}{|a|^{5/2}} & \left\|  \sqrt{\psi_{j}(\xib)}\Fh[X_{v_h}(a,b,\cdot)](\xib) \right\|_{L^2_{b,\xib}} \left\| \int  W_{a,j}(x_1)G(x_1,\xb) dx_1\right\|_{L^2_{b,\xib}} da \nonumber \\
     &\leq  \frac{\|v_h\|_{L^2}}{2^{j(k+1)M}}  h^{-\delta(n,p,k)}  2^{\mu(j,n,p)} \|G\|_{L^{p'}} 
     \begin{cases}
         \int_{|a|\geq1} |a|^{-2+\frac{1}{p}-\frac{n-1}{2}\left(\frac{1}{2}-\frac{1}{p}\right)} da  & 2\leq p \leq \frac{2(n+1)}{n-1} \\ 
         \int_{|a|\geq1} |a|^{-2} da & \frac{2(n+1)}{n-1}\leq p \leq \infty
     \end{cases} \nonumber  \\
     &\lesssim  \|v_h\|_{L^2} h^{-\delta(n,p,k)} 2^{\mu(j,n,p)-j(k+1)M}    \|G\|_{L^{p'}} \label{eqn:thrma>1}.
\end{align}
Note that the integral with respect to $a$ in $(\ref{eqn:thrma>1})$ is convergent as
\[
\int_1^\infty a^{-2+\frac{1}{p}-\frac{n-1}{2}\left(\frac{1}{2}-\frac{1}{p}\right)}da \leq \int_1^\infty a^{-2+\frac{1}{p}}da <\infty
\]
since $p\geq2$.
Combining (\ref{eqn:thrma<1}) and (\ref{eqn:thrma>1}) with (\ref{eqn:upbdd}) we obtain for any $M\in\N$
\[
    \|u_h\|_{L^p}\lesssim  h^{-\delta(n,p,k)} \|v_h\|_{L^2_x}   \sum_{j=0} 2^{\mu(j,n,p)-j(k+1)M}. 
\]
Moreover, taking $M$ large, we see that the sum over $j$ is convergent and therefore
\[
\|u_h\|_{L^p}\lesssim  h^{-\delta(n,p,k)} \|v_h\|_{L^2_x}. 
\]
Lastly, using that $v_h=W(x_1)u_h$ and the unitarity of $W(x_1)$, we have
\[
\|u_h\|_{L^p}\lesssim  h^{-\delta(n,p,k)} \|u_h\|_{L^2}, 
\]
as claimed.
 \end{proof}

 \subsection{$L^2$ estimates and interpolation}\label{sec:lemmas}
 The $L^2$ estimates provided in Lemmas \ref{lem:ftterm} and \ref{lem:W*term} remain to be proved.  Simply using that $v_h$ is a strong joint quasimode for $hD_{x_1}$ and $\tilde{q}(x,hD_{\xb})$ we can prove the former as follows.  

 \begin{proof}[Proof of Lemma \ref{lem:ftterm}]
    We begin with the case where $|a|\geq 1$. We have two subcases to consider: (1) $j\geq 1$ and (2) $j=0$.
    
   If $j\geq 1$, then on the support of $\psi_{j}(\xib)$ we know $\tilde{q}(x,\xib)$ is supported away from zero and so we can write
   \[
  \sqrt{\psi_{j}} \Fh[X_{v_h}](\xib) =\frac{  \sqrt{\psi_{j}} }{\tilde{q}^M(x,\xib)} \tilde{q}^M(x,\xib)  \Fh[X_{v_h}](\xib).
   \]
   Therefore, 
   \begin{align*}
   \|  \sqrt{\psi_{j}} \Fh[X_{v_h}](\xib) \|_{L^2_{\xib}} & \leq \sup_{\xib} \left|  \frac{  \sqrt{\psi_{j}} }{\tilde{q}^M(x,\xib)}  \right| \| \tilde{q}^M(x,\xib) \Fh[X_{v_h}](\xib) \|_{L^2_{\xib}}.
   %&\lesssim \frac{1}{\left( 2^{2j}h + 2^{(k+1)\ell}h  \right)^M} \| \Fh[\tilde{q}^M(x,hD_{\xb})  X_{v_h}] \|_{L^2_{\xib}} \\
 %  &= \frac{1}{\left( 2^{2j}h + 2^{(k+1)\ell}h  \right)^M} \| \tilde{q}^M(x,hD_{\xb})  X_{v_h} \|_{L^2_{\xb}}. 
   \end{align*}
   However, using the ellipticity of $\tilde{q}$ from (\ref{eqn:qstructure}), we have
   \[
  \left|  \frac{  \sqrt{\psi_{j}} }{\tilde{q}^M(x,\xib)}  \right| \lesssim \left|  \frac{  \sqrt{\psi_{j} }}{ \left( C(x)|\xib|^{k+1} \right)^M}  \right|.
   \]
  Thus, since $\psi_{j}$ is supported for $\frac{1}{2} h^{\frac{1}{k+1}}2^{j}\leq |\xib| \leq \frac{3}{2} h^{\frac{1}{k+1}}2^{j} $, we can control
\begin{equation}\label{eqn:a1j1}
  \|  \sqrt{\psi_{j}} \Fh[X_{v_h}](\xib) \|_{L^2_{\xib}}  \lesssim \frac{1}{h^M 2^{j(k+1)M}  } \| \tilde{q}^M(x,\xib) \Fh[X_{v_h}](\xib) \|_{L^2_{\xib}} \lesssim \frac{1}{h^M 2^{j(k+1)M}  }  \| \tilde{q}^M(x,hD_{\xb})  X_{v_h} \|_{L^2_{\xb}}.
  \end{equation}
   Recall from (\ref{eqn:X_vdef}) that
   \[
   X_{v_h}(a,b,\xb)=\frac{1}{|a|^{1/2}}\int f\left(\frac{x_1-b}{a} \right)v_h(x_1,\xb)dx_1.
   \]
   Moreover, since $v_h$ is localised in an $O(1)$ region we have
   \[
   | \tilde{q}^M(x,hD_{\xb}) X_{v_h}(a,b,\xb)| \leq \frac{1}{|a|^{1/2}} \int \left| f \left(\frac{x_1-b}{b} \right) \tilde{q}^M(x,hD_{\xb})v_h(x_1,\xb) \right| dx_1 \lesssim \frac{1}{|a|^{1/2}}\|  \tilde{q}^M(x,hD_{\xb})v_h\|_{L^2_{x_1}}.
   \]
This, combined with $(\ref{eqn:a1j1})$ implies
   \[
    \|  \sqrt{\psi_{j}} \Fh[X_{v_h}](\xib) \|_{L^2_{\xib}} \lesssim \frac{ \| \tilde{q}^M(x,hD_{\xb})v_h \|_{L^2_{x}}}{|a|^{1/2} h^M 2^{j(k+1)M}}   \lesssim \frac{ \| v_h \|_{L^2_{x}}}{|a|^{1/2}2^{j(k+1)M}} ,
   \]
   where we used that $v_h$ is a strong quasimode of order $h$ for $\tilde{q}(x,hD_{\xb})$. Lastly, note that if $b \gg a$ the support of $f((x_1-b)/a)$ does not over lap with the support of $v_h$, and so $X_{v_h}(a,b,\xb)=0$ for $b \gg a$. Therefore
   \[
   \|  \sqrt{\psi_{j}} \Fh[X_{v_h}](\xib) \|_{L^2_{\xib,b}} \lesssim \left(\int_{-a}^a \|  \sqrt{\psi_{j}} \Fh[X_{v_h}](\xib) \|_{L^2_{\xib}}^2   db\right)^{1/2} \lesssim \frac{|a|^{1/2}}{|a|^{1/2} 2^{j(k+1)M}} \| v_h \|_{L^2_{x}}=\frac{\|v_h\|_{L^2_x}}{2^{j(k+1)M}} % \qquad \text{for} \,\, j \geq 1,
   \]
   as desired. 
   
   Next, we consider the case where $j=0$. Here we note
       \[
       \|\sqrt{\psi_{0}} \Fh[X_{v_h}](\xib)\|_{L^2_{\xib,b}} \lesssim  \|  \Fh[X_{v_h}]  \|_{L^2_{\xib,b}} =  \| X_{v_h}  \|_{L^2_{\xb,b}}. 
       \]
       Furthermore, since $v_h$ is supported in an $O(1)$ neighbourhood and using the definition of $X_{v_h}$ (\ref{eqn:X_vdef}) we know that $|X_{v_h}|\lesssim |a|^{-1/2}\|v_h\|_{L^2_{x_1}}$. Hence $\|X_{v_h}\|_{L^2_{\xb}}\lesssim|a|^{-1/2}\|X_{v_h}\|_{L^2_x}$. Once again, using the support properties of $v_h$ and $f$ we have
       \[
       \|X_{v_h} \|_{L^2_{\xb,b}} \lesssim \left(\int_{-a}^a \|X_{v_h}  \|_{L^2_{\xb}} db\right)^{1/2} \lesssim\frac{|a|^{1/2}}{|a|^{1/2}} \|v_h\|_{L^2_x}. 
       \]
       Thus we have
       \[
        \|\sqrt{\psi_{0}} \Fh[X_{v_h}](\xib)\|_{L^2_{\xib,b}} \lesssim \|v_h\|_{L^2} = \frac{1}{2^{0(k+1)M}}\|v_h\|_{L^2}.
       \]
       This proves the claim when $|a|\geq 1$.

       Next, we consider the case where $|a|\leq 1$. Here, we note that since the mother wavelet satisfies $\int f(t) dt=0$, there exists a function $g$ such that $f(t)=i D_t g(t)$. Thus
    \[
    f\left(\frac{x_1-b}{a}\right)=a i D_{x_1} g\left( \frac{x_1-b}{a}\right). 
    \]
    Writing $X_{v_h}$ in terms of $g$ we have
    \begin{align}
    X_{v_h}(a,b,\xb)=\frac{1}{|a|^{1/2}} \int f\left(\frac{x_1-b}{a} \right) \overline{v_h(x_1,\xb)} dx_1 &=\frac{a}{|a|^{1/2}} \int i D_{x_1} g\left(\frac{x_1-b}{a} \right) \overline{v_h(x_1,\xb)} dx_1 \nonumber \\
    &=\frac{-ia}{h|a|^{1/2}} \int  g\left(\frac{x_1-b}{a} \right) \overline{ (hD_{x_1}) v_h(x_1,\xb) } dx_1. \label{eqn:Xwg}
    \end{align}
    We have two sub-cases to consider: (1) $j\geq 1$ and (2) $j=0$. First, suppose $j\geq 1$. Then just as in the $|a|\geq 1$ case we have
    \[
     \|  \sqrt{\psi_{j}} \Fh[X_{v_h}](\xib) \|_{L^2_{\xib}} \lesssim \frac{ \| \tilde{q}^M(x,hD_{\xb})  X_{v_h} \|_{L^2_{\xb}}}{h^M 2^{j(k+1)M}},
    \]
    where we used that $\tilde{q}$ is bounded away from zero on the support of $\psi_{j}$. Next, since $X_{v_h}$ satisfies (\ref{eqn:Xwg}),
    \[
    |\tilde{q}^M(x,hD_{\xb})X_{v_h}| \lesssim \frac{|a|}{h|a|^{1/2}}\int \left|g\left(\frac{x_1-b}{b}\right)\right| \left|\tilde{q}^M(x,hD_{\xb}) (hD_{x_1}) v_h \right| dx_1 \lesssim \frac{|a|}{h}\|\tilde{q}^M(x,hD_{\xb})(hD_{x_1})v_h\|_{L^2_{x_1}},
    \]
     where we used that $\|g((x_1-b)/a)\|_{L^2_{x_1}}\lesssim a^{1/2}$. Therefore we see that
    \[
     \|  \sqrt{\psi_{j}} \Fh[X_{v_h}](\xib) \|_{L^2_{\xib}} \lesssim \frac{|a| \|\tilde{q}^M(x,hD_{\xb}) (hD_{x_1})v_h\|_{L^2_x}}{h h^M 2^{j(k+1)M}}.
    \]
    Moreover, since $v_h$ is a strong joint quasimode of $\tilde{q}(x,hD_{\xb})$ and $hD_{x_1}$, and using that $X_{v_h}$ is zero for $b\gg a$ we have
    \[
     \|  \sqrt{\psi_{j}} \Fh[X_{v_h}](\xib) \|_{L^2_{\xib,b}} \lesssim \frac{|a|^{3/2}}{2^{j(k+1)M}} \| v_h\|_{L^2_x}.
    \]
    
    Lastly, for $j=0$, note
    \[
     \|\sqrt{\psi_{0}} \Fh[X_{v_h}](\xib)\|_{L^2_{\xib,b}} \lesssim  \|  \Fh[X_{v_h}]  \|_{L^2_{\xib,b}} =  \| X_{v_h}  \|_{L^2_{\xb,b}}. 
     \]
     Moreover, since $X_{v_h}$ satisfies (\ref{eqn:Xwg}) we have 
     \[
     |X_{v_h}|\lesssim \frac{|a|^{1/2}}{h} \int \left|g\left(\frac{x_1-b}{a} \right)\right| |hD_{x_1}v_h|dx_1\lesssim \frac{|a|}{h}\|hD_{x_1}v_h\|_{L^2_{x_1}},
     \]
     which implies $\|X_{v_h}\|_{L^2_{\xb}}\lesssim h^{-1}|a| \|hD_{x_1}v_h\|_{L^2_x}$. Lastly, integrating over $b$ and using that $v_h$ is a quasimode of $hD_{x_1}$ we obtain
     \[
    \| X_{v_h}  \|_{L^2_{\xb,b}} \lesssim \frac{|a|^{3/2}}{h} \|(hD_{x_1})v_h\|_{L^2_x} \lesssim |a|^{3/2}\|v_h\|_{L^2_x}.
     \]
     Combining, we obtain
     \[
     \|\sqrt{\psi_{0}} \Fh[X_{v_h}](\xib)\|_{L^2_{\xib,b}} \lesssim \frac{|a|^{3/2}}{2^{0(k+1)M}}\|v_h\|_{L^2_x}, 
     \]
     as desired.    
 \end{proof}

 Lastly, we must prove Lemma \ref{lem:W*term}. To do so, we utilise a ``$TT^*$" argument and Riesz-Thorin Interpolation. 

 \begin{proof}[Proof of Lemma \ref{lem:W*term}]
     We begin by considering the square of the $L^2$-norm of $\int W_{a,j}(x_1)G dx_1$. We compute
     \begin{align}
         &\left\| \int W_{a,j}(x_1)G(x_1,\xb) dx_1 \right\|^2_{L^2_{b,\xib}} \nonumber \\
         &\qquad =\iint \left\langle W_{a,j}(x_1)G(x_1,\xb), W_{a,j}(z_1)G(z_1,\xb) \right\rangle_{L^2_{b,\xib}} dx_1 \, dz_1 \nonumber \\
         &\qquad = \iint  \left\langle G(x_1,\xb),  W^{*}_{a,j}(x_1) W_{a,j}(z_1)G(z_1,\cdot) \right\rangle_{L^2_{\xb}} dx_1 \, dz_1  \nonumber \\
         &\qquad \leq \iint \|G(x_1,\xb)\|_{L^{p'}_{\xb}} \|W^{*}_{a,j}(x_1) W_{a,j}(z_1) \|_{L^{p'}_{\xb}\to L^{p}_{\xb}}  \|G(z_1,\xb)\|_{L^{p'}_{\xb}} dx_1 \, dz_1. \label{eqn:goal}
     \end{align}
     Therefore, we need an $L^{p'}\to L^p$ estimate on $W^{*}_{a,j}(x_1) W_{a,j}(z_1).$ To obtain such an estimate we first find $L^1 \to L^\infty$ and $L^2\to L^2$ estimates, then we interpolate.

     Beginning with the $L^1\to L^\infty$ estimate, we compute $W^{*}_{a,j}(x_1) W_{a,j}(z_1)$,
     \[
     W^{*}_{a,j}(x_1) W_{a,j}(z_1) G(\bar{z})=\int K_{a,j}(x_1,z_1,\bar{z},\xb) G(\bar{z}) d \bar{z},
     \]
     where
     \[
     K_{a,j}(x_1,z_1,\bar{z},\xb)=\frac{1}{(2\pi h)^{n-1}}\int e^{\frac{i}{h} \left( \varphi_1(x_1,\xb,\xib)-\varphi_1(z_1,\bar{z},\xib) \right)} b^h(x,\xib) b^h(z,\xib) f\left( \frac{x_1-b}{a}\right) f\left( \frac{z_1-b}{a}\right) \psi_{j}(\xib) d\xib \, db.
     \]
     Now, if $|x_1-z_1|\gg a,$ then the support of $f((x_1-b)/a)$ and $f((z_1-b)/a)$ will not overlap and hence the kernel is zero. So we only need to consider the case where $|x_1-z_1|\lesssim a$. To obtain the $L^1\to L^\infty$ estimate, we must bound the kernel $K_{a,j}$. First, note that we can control the integral in $b$ using the support properties of $f$, by
     \begin{equation}\label{eqn:bint}
    \left| \int f\left( \frac{x_1-b}{a}\right) f\left( \frac{z_1-b}{a}\right) db \right| \leq \left\|f\left( \frac{x_1-b}{a}\right)  \right\|_{L^2_b} \left\|f\left( \frac{z_1-b}{a}\right)  \right\|_{L^2_b} \lesssim |a|.
     \end{equation}
     To estimate the integral in $\xib$, we could just use the support properties of $\psi_{j}$. Combining this with (\ref{eqn:bint}) would give
     \begin{equation}\label{eqn:trivial}
  |K_{a,j}|\lesssim \frac{1}{(2\pi h)^{n-1}}|a| h^{\frac{n-1}{k+1}}2^{j(n-1)} \lesssim |a| h^{-(n-1)\left(1-\frac{1}{k+1} \right)}2^{j(n-1)}. 
  \end{equation}
  On the other hand, cancellation due to oscillations often allows us to obtain better estimates via the Stationary Phase Lemma. %Unfortunately, this cannot be directly applied as the cutoff $\psi_{j}$ is $h$-dependent. However, there are ways to get around this, such as using the stationary phase type estimate for low symbol regularity from \cite[Theorem 0.2]{TacySP} or the Van der Corput lemma. 
  %We claim that the phase is not oscillating enough in $\xi_2$ for there to be any gains from the trivial estimate. Heuristically, for $\xi_2\in\supp(\psi_{j,\ell})$ we have $\xi_2\approx h^{1/2}$ and  since $p_1$ satisfies $p_1(0)=0$, $\grad_{\xib}p_1(0)=0$, and the curvature condition, $\varphi_1(x_1,\xb,\xib)$ is quadratic in $\xib$. Therefore, the phase approximately behaves like 
  %\[
  %\frac{1}{h}\xi_2^2\approx \frac{1}{h} (h^{1/2})^2=1 \qquad \text{ for } \xi_2\in \supp\psi_{j,\ell}. 
 % \]
 % Therefore, we do not expect to obtain any further gains by exploiting oscillations in $\xi_2$. On the other hand, we will be able to make gains from the oscillations in $\xi_3$. To do so, we use the Van der Corput Lemma. 
 We need to examine the phase function further to evaluate the integral in $\xib$. 
Since $\varphi_1$ satisfies (\ref{eqn:phi1def}), we can write
     \begin{align*}
          \Phi(x_1, \xb,z_1,\bar{z},\xib)&:=  \varphi_1(x_1,\xb,\xib)-\varphi_1(z_1,\bar{z},\xib)=\varphi_1(x_1,\xb,\xib)-\varphi_1(x_1,\bar{z},\xib)+\varphi_1(x_1,\bar{z},\xib)-\varphi_1(z_1,\bar{z},\xib) \\
            &=\langle \xb-\bar{z},\xib \rangle +\langle \xb-\bar{z},x_1 F(x_1,\xb,\bar{z},\xib) \rangle+(x_1-z_1) a_1(0,\bar{z},\xib)+O(|x_1-z_1|^2)
     \end{align*}
     for some $F\in C^{\infty}(\R^{1+3(n-1)})$.
     Therefore,
     \[
     \partial_{\xib} \Phi=(x_1 \partial_{\xib}F+1)(\xb-\bar{z})+(x_1-z_1)\partial_{\xib}a_1(0,\bar{z},\xib) +O(|x_1-z_1|^2)
     \]
     and 
     \[
     \partial_{\xib}^2\Phi=x_1 \partial_{\xib}^2 F (\xb-\bar{z})+(x_1-z_1)\partial_{\xib}^2 a_1(0,\bar{z},\xib)+O(|x_1-z_1|^2).
     \]
    The existence of a stationary point implies
     \[
     \xb-\bar{z}=O(x_1-z_1),
     \]
     since for $x_1$ sufficiently small we have $(x_1 \partial_{\xib}F+1)\not=0$. Therefore, near the stationary point, we have
     \begin{equation}\label{eqn:hess}
    \partial_{\xib}\Phi=O(|x_1-z_1|) \qquad \text{and} \qquad \partial_{\xib}^2\Phi=(x_1-z_1) \left(\partial_{\xib}^2 a_1(0,\bar{z},\xib)+O(|x_1-z_1|+|x_1|) \right).
     \end{equation}
    The curvature condition on $p_1$ implies that $|\partial_{\xib}^2a_1|>c>0$ and hence, near the stationary point we have
    \[
     |\det \partial_{\xib}^2\Phi|\geq c|x_1-z_1|^{n-1}.
    \]
    The Stationary Phase Lemma would imply that
    \[
 \left|  \frac{1}{(2\pi h)^{n-1}}\int e^{\frac{i}{h} \left( \varphi_1(x_1,\xb,\xib)-\varphi_1(z_1,\bar{z},\xib) \right)} b^h(x,\xib) b^h(z,\xib) \psi_{j}(\xib) d\xib  \right| \lesssim h^{-(n-1)} \left(\frac{h}{|x_1-z_1|}  \right)^{\frac{n-1}{2}}.
    \]
    Unfortunately, the Stationary Phase Lemma cannot be directly applied since the cutoff $\psi_{j}$ is $h$-dependent. However, for our purposes, it is enough to have the estimate form of the Stationary Phase Lemma as we do not need the full asymptotic expansion. Therefore we can use the Van der Corput Lemma (see for example \cite[Lemma 1.1.2]{SoggeBlue} for $d=1$ case, and higher dimensions look to \cite[Theorem 0.2]{TacySP} or \cite[Theorem 6]{SPTypeEst}). The proof of the Van der Corput Lemma proceeds by first placing a cut off function around the critical point of scale $s_{c}=h^{\frac{1}{2}}|\operatorname{Hess}|^{-\frac{1}{2d}}$, where $|\operatorname{Hess}|$ is the determinant of the Hessian and $d$ the dimension of the oscillatory integral. At a distance greater than $s_{c}$ from the critical point the non-degeneracy assumption ensures that the oscillation from the complex exponential is enough to induce decay (via an integration by parts argument). In the region within $s_{c}$ of the critical point, the complex exponential does not oscillate enough to give significant cancellation, and the integral over this region is estimated by support properties only. This argument still works when the symbol depends on $h$, so long as the regularity loss per derivative is less than $s_{c}^{-1}$ (for completeness we include a proof of this in Appendix \ref{sec:app}).  
    Therefore, since we lose $h^{-\frac{1}{k+1}}2^{-j}$ each time we differentiate $\psi_j(|\xib|)$, we need
    \[
    h^{-\frac{1}{k+1}}2^{-j}\leq s_c^{-1}=h^{-\frac{1}{2}}|\operatorname{Hess}|^{\frac{1}{2(n-1)}}\lesssim h^{-\frac{1}{2}}|x_1-z_1|^{\frac{1}{2}},
    \]
    where we use the structure of the Hessian from (\ref{eqn:hess}). So, in the region where $|x_1-z_1|\geq h^{1-\frac{2}{k+1}}2^{-2j}$ we can apply Van der Corput. When $|x_1-z_1|\leq h^{1-\frac{2}{k+1}}2^{-2j}$ we should just accept the trivial estimate (\ref{eqn:trivial}). Therefore, we have
    \begin{equation}\label{eqn:l1linf}
         \|  W^{*}_{a,j}(x_1) W_{a,j}(z_1) \|_{L^1_{\xb}\to L^\infty_{\xb}} \lesssim \begin{cases}
      |a| h^{-\frac{n-1}{2}}|x_1-z_1|^{-\frac{n-1}{2}} &\text{when }\, \, |x_1-z_1|\geq h^{1-\frac{2}{k+1}}2^{-2j} \\ |a| h^{-(n-1)\left(1-\frac{1}{k+1} \right)}2^{j(n-1)} &\text{when }\, \, |x_1-z_1|\leq h^{1-\frac{2}{k+1}}2^{-2j}
  \end{cases}.
    \end{equation}
     
 Next, we seek a $L^2\to L^2$ estimate. First, we note 
 \begin{align*}
     W^{*}_a(x_1)g(b,\xib) &=\frac{1}{(2\pi h)^{\frac{n-1}{2}}}\int e^{\frac{i}{h} \varphi_1(x_1,\xb,\xib)}b^h(x,\xib) f\left(\frac{x_1-b}{a} \right) g(b,\xib) db \, d\xib \\
     &=\frac{1}{(2\pi h)^{n-1}}\int e^{\frac{i}{h} \left( \varphi_1(x_1,\xb,\xib) -\yb\cdot \xib \right)}b^h(x,\xib) f\left(\frac{x_1-b}{a} \right) \Fh^{-1}[g(b,\cdot)](\yb) db \,d\yb \, d\xib \\
     &=W^*(x_1)\left[ \int f\left(\frac{x_1-b}{a} \right) \Fh^{-1}[g(b,\cdot)](\yb) db  \right].
 \end{align*}
 Furthermore, since $W^*$ is unitary, we see that
 \[
 \|W^{*}_a(x_1)g(b,\xib)\|_{L^2_{\xb}}\lesssim \left\| \int f\left(\frac{x_1-b}{a} \right) \Fh^{-1}[g(b,\cdot)](\yb) db  \right\|_{L^2_{\yb}} \leq \left\| f\left(\frac{x_1-b}{a} \right)\right\|_{L^2_b} \|g(b,\xib) \|_{L^2_{b,\xib}}. %\lesssim |a|^{1/2}\|g(b,\xib) \|_{L^2_{b,\xib}}
 \]
 Thus, since $\|f((x_1-b)/a)\|_{L^2_b}\lesssim |a|^{1/2}$, we see that $\|W^{*}_a(x_1)\|_{L^2_{b,\xib}\to L^2_{\xb}}\lesssim |a|^{1/2}$, and so
 \begin{equation}\label{eqn:l2l2}
  \|  W^{*}_{a,j}(x_1) W_{a,j}(z_1) \|_{L^2_{\xb}\to L^2_{\xb}} \lesssim \|  W_{a}^{*}(x_1) W_{a}(z_1) \|_{L^2_{\xb}\to L^2_{\xb}} \lesssim |a|.
 \end{equation}
Interpolating between the $L^1\to L^\infty$ (\ref{eqn:l1linf}) and $L^2\to L^2$ (\ref{eqn:l2l2}) estimates, we obtain
 \begin{equation}\label{eqn:LpLP}
  \|  W^{*}_{a,j}(x_1) W_{a,j}(z_1) \|_{L^{p'}_{\xb}\to L^{p}_{\xb}} \lesssim 
  \begin{cases}
      |a| h^{-\frac{n-1}{2}\left(1-\frac{2}{p} \right)}|x_1-z_1|^{-\frac{n-1}{2}\left(1-\frac{2}{p} \right)} &\text{when }\, \, |x_1-z_1|\geq h^{1-\frac{2}{k+1}}2^{-2j} \\ 
      |a| h^{-(n-1)\left(1-\frac{1}{k+1} \right)\left(1-\frac{2}{p} \right)}2^{j(n-1)\left(1-\frac{2}{p} \right)} &\text{when }\, \, |x_1-z_1|\leq h^{1-\frac{2}{k+1}}2^{-2j}
  \end{cases}.
 \end{equation}
 So, as we saw in (\ref{eqn:goal}), in order to control the square of the $L^2$-norm of $\int W_{a,j}(x_1)G dx_1$ we need to bound
 \[
  \iint \|G(x_1,\xb)\|_{L^{p'}_{\xb}} \|W^{*}_{a,j}(x_1) W_{a,j}(z_1) \|_{L^{p'}_{\xb}\to L^{p}_{\xb}}  \|G(z_1,\xb)\|_{L^{p'}_{\xb}} dx_1 \, dz_1.
 \]
 Using the two cases of our $L^{p'}\to L^p$ estimate (\ref{eqn:LpLP}), we find that we must bound both
  \begin{equation}\label{eqn:geq}
|a|h^{-\frac{n-1}{2}\left(1-\frac{2}{p} \right)}\iint \|G(x_1,\xb)\|_{L^{p'}_{\xb}}   \|G(z_1,\xb)\|_{L^{p'}_{\xb}} |x_1-z_1|^{-\frac{n-1}{2}\left(1-\frac{2}{p} \right)} dx_1 \, dz_1 \quad \text{when} \,\, |x_1-z_1|\geq h^{1-\frac{2}{k+1}}2^{-2j}
 \end{equation}
 and
 \begin{equation}\label{eqn:leq}
|a| h^{-(n-1)\left(1-\frac{1}{k+1} \right)\left(1-\frac{2}{p} \right)}2^{j(n-1)\left(1-\frac{2}{p} \right)} \iint \|G(x_1,\xb)\|_{L^{p'}_{\xb}}   \|G(z_1,\xb)\|_{L^{p'}_{\xb}}   dx_1 \, dz_1 \quad \text{when} \,\, |x_1-z_1|\leq h^{1-\frac{2}{k+1}}2^{-2j}. 
 \end{equation}
 We begin with controlling (\ref{eqn:leq}). We compute
\begin{align}
    \iint & \|G(x_1,\xb)\|_{L^{p'}_{\xb}}   \|G(z_1,\xb)\|_{L^{p'}_{\xb}} \mathds{1}_{\left\{|x_1-z_1|\leq h^{1-\frac{2}{k+1}}2^{-2j} \right\}}  dx_1 \, dz_1  \nonumber \\
    &= \int \|G(x_1,\xb)\|_{L^{p'}_{\xb}}   \left( \|G(x_1,\xb)\|_{L^{p'}_{\xb}} * \mathds{1}_{\left\{|x_1|\leq   h^{1-\frac{2}{k+1}}2^{-2j}\right\}}  \right) dx_1 \nonumber \\
    & \leq \left\| \|G(x_1,\xb)\|_{L^{p'}_{\xb}}  \right\|_{L^{p'}_{x_1}} \left\|  \|G(x_1,\xb)\|_{L^{p'}_{\xb}} * \mathds{1}_{\left\{|x_1|\leq   h^{1-\frac{2}{k+1}}2^{-2j} \right\}} \right\|_{L^{p}_{x_1}}. \label{eqn:Young} %\\
   % & \leq \|G\|_{L^{p'}_x}^2  \left\|  \mathds{1}_{\left\{|x_1|\leq  h^{1-\frac{2}{k+1}}2^{-2j} \right\}} \right\|_{L^{p/2}_{x_1}} \lesssim \|G\|_{L^{p'}_x}^2 h^{\frac{2}{p}-\frac{4}{p(k+1)}}2^{-4j/p}
\end{align}
Furthermore, using Young's inequality on the second term, we can bound (\ref{eqn:Young}) by
\[
   % \iint  \|G(x_1,\xb)\|_{L^{p'}_{\xb}}   \|G(z_1,\xb)\|_{L^{p'}_{\xb}} \mathds{1}_{\left\{|x_1-z_1|\leq h^{1-\frac{2}{k+1}}2^{-2j} \right\}}  dx_1 \, dz_1 \\ \leq  
   \|G\|_{L^{p'}_x}^2  \left\|  \mathds{1}_{\left\{|x_1|\leq  h^{1-\frac{2}{k+1}}2^{-2j} \right\}} \right\|_{L^{p/2}_{x_1}} \lesssim \|G\|_{L^{p'}_x}^2 h^{\frac{2}{p}-\frac{4}{p(k+1)}}2^{-4j/p}.
\]
    Therefore  (\ref{eqn:leq}) is bounded by
    \begin{multline}\label{eqn:bddleq}
  |a| h^{-(n-1)\left(1-\frac{1}{k+1} \right)\left(1-\frac{2}{p} \right)}2^{j(n-1)\left(1-\frac{2}{p} \right)} \|G\|_{L^{p'}_x}^2 h^{\frac{2}{p}-\frac{4}{p(k+1)}}2^{-4j/p}  \\= |a| h^{-(n-1) +\frac{2n}{p}+\frac{1}{k+1}\left(n-1-\frac{2(n+1)}{p} \right)} 2^{j\left(n-1-\frac{2(n+1)}{p} \right)} \|G\|_{L^{p'}_x}^2.
    \end{multline}
Next, we consider (\ref{eqn:geq}). 
    In a similar manner to the previous calculation, we compute
    \begin{align*}
        \iint & \|G(x_1,\xb)\|_{L^{p'}_{\xb}}   \|G(z_1,\xb)\|_{L^{p'}_{\xb}}|x_1-z_1|^{-\frac{n-1}{2}\left(1-\frac{2}{p} \right)} \mathds{1}_{\left\{ h^{1-\frac{2}{k+1}} 2^{-2j} \leq  |x_1-z_1|\leq a \right\}} dx_1 \, dz_1 \\
        & = \int \|G(x_1,\xb)\|_{L^{p'}_{\xb}}  \left( \|G(x_1,\xb)\|_{L^{p'}_{\xb}}  * |x_1|^{-\frac{n-1}{2}\left(1-\frac{2}{p} \right)} \mathds{1}_{\left\{ h^{1-\frac{2}{k+1}} 2^{-2j} \leq  |x_1|\leq a \right\}} \right) dx_1 \\
        & \leq \|G(x_1,\xb)\|_{L^{p'}_x}^2 \left( \int_{h^{1-\frac{2}{k+1}}2^{-2j}}^a |x_1|^{-\frac{n-1}{2}\left(1-\frac{2}{p} \right)\frac{p}{2}} dx_1 \right)^{2/p},
    \end{align*}
    where we used Young's Inequality in the third line.
    Furthermore
    \[
    \left( \int_{h^{1-\frac{2}{k+1}}2^{-2j}}^a |x_1|^{-\frac{n-1}{2}\left(1-\frac{2}{p} \right)\frac{p}{2}} dx_1 \right)^{2/p}\lesssim \begin{cases}
        |a|^{-\frac{n-1}{2}+\frac{n+1}{p}} & 2\leq p <\frac{2(n+1)}{n-1} \\
        h^{\left( -\frac{n-1}{2}+\frac{n+1}{p} \right)\left(1-\frac{2}{k+1} \right)} 2^{-2j \left( -\frac{n-1}{2}+\frac{n+1}{p} \right)}& \frac{2(n+1)}{n-1} <p\leq \infty
    \end{cases}.
    \]
    Note that if $2\leq p<\frac{2(n+1)}{n-1}$ then the exponent $-\frac{n-1}{2}+\frac{n+1}{p}$ is positive and hence 
    \[
    |a|^{-\frac{n-1}{2}+\frac{n+1}{p}} \leq \begin{cases}
        1 & |a|\leq 1  \\ |a|^{-\frac{n-1}{2}+\frac{n+1}{p}}  & |a|\geq 1
    \end{cases} \qquad \text{for}\,\, 2\leq p<\frac{2(n+1)}{n-1}.
    \]
   We use Hardy-Littlewood-Sobolev to resolve the edge case where $p=\frac{2(n+1)}{n-1}$.
Therefore we can bound (\ref{eqn:geq}) by
    \begin{align}
        |a|& h^{-\frac{n-1}{2}\left(1-\frac{2}{p} \right)}  \|G\|_{L^{p'}_x}^2 \times \begin{cases} 1 & |a|\leq 1 \,\, \text{and} \,\, 2\leq p \leq \frac{2(n+1)}{n-1} \\  
          |a|^{-\frac{n-1}{2}+\frac{n+1}{p}}  &|a|\geq 1 \,\, \text{and} \,\, 2\leq p \leq \frac{2(n+1)}{n-1} \\  
       h^{\left( -\frac{n-1}{2}+\frac{n+1}{p} \right)\left(1-\frac{2}{k+1} \right)} 2^{-2j \left( -\frac{n-1}{2}+\frac{n+1}{p} \right)} & \frac{2(n+1)}{n-1} \leq p\leq \infty \end{cases}. \label{eqn:bddgeq}
    \end{align}
    Note we can expand the exponent to obtain
    \[
     h^{-\frac{n-1}{2}\left(1-\frac{2}{p} \right)}   h^{\left( -\frac{n-1}{2}+\frac{n+1}{p} \right)\left(1-\frac{2}{k+1} \right)} 2^{-2j \left( -\frac{n-1}{2}+\frac{n+1}{p} \right)}=h^{-(n-1) +\frac{2n}{p}+\frac{1}{k+1}\left(n-1-\frac{2(n+1)}{p} \right)} 2^{j\left(n-1-\frac{2(n+1)}{p} \right)}. 
    \]
    Combining (\ref{eqn:bddleq}) and (\ref{eqn:bddgeq}) with (\ref{eqn:goal}) we see that we can control
    \[
    \left\| \int W_{a,j}(x_1)G(x_1,\xb) dx_1 \right\|^2_{L^2_{b,\xib}} 
    \]
    by 
    \[ |a| \|G\|_{L^{p'}_x}^2 \begin{cases}
            h^{-\frac{n-1}{2}\left(1-\frac{2}{p} \right)} +h^{-(n-1) +\frac{2n}{p}+\frac{1}{k+1}\left(n-1-\frac{2(n+1)}{p} \right)}  & 2\leq p \leq \frac{2(n+1)}{n-1} \\
          h^{-(n-1) +\frac{2n}{p}+\frac{1}{k+1}\left(n-1-\frac{2(n+1)}{p} \right)} 2^{j\left(n-1-\frac{2(n+1)}{p} \right)}    & \frac{2(n+1)}{n-1} \leq p\leq \infty 
     \end{cases} 
   \]
   for $|a|\leq 1$  and 
\[\|G\|_{L^{p'}_x}^2 \begin{cases}
        |a|^{1-\frac{n-1}{2}+\frac{n+1}{p}} h^{-\frac{n-1}{2}\left(1-\frac{2}{p} \right)} + |a|h^{-(n-1) +\frac{2n}{p}+\frac{1}{k+1}\left(n-1-\frac{2(n+1)}{p} \right)} 2^{j\left(n-1-\frac{2(n+1)}{p} \right)}  & 2\leq p \leq \frac{2(n+1)}{n-1} \\ 
        |a| h^{-(n-1) +\frac{2n}{p}+\frac{1}{k+1}\left(n-1-\frac{2(n+1)}{p} \right)} 2^{j\left(n-1-\frac{2(n+1)}{p} \right)}  & \frac{2(n+1)}{n-1} \leq p\leq \infty 
     \end{cases}.
\]
    for $|a|\geq 1$.
    Lastly, taking the largest term in the case where $2\leq p \leq \frac{2(n+1)}{n-1}$ and taking the square root of both sides, we finally have 
    \[
    \left\| \int W_{a,j}(x_1)G(x_1,\xb) dx_1 \right\|_{L^2_{b,\xib}} \lesssim h^{-\delta(n,p,k)} |a|^{1/2}\|G\|_{L^{p'}_x} \begin{cases}
            1 & 2\leq p \leq \frac{2(n+1)}{n-1} \\
              2^{j\left( \frac{n-1}{2}-\frac{n+1}{p}\right)} & \frac{2(n+1)}{n-1}\leq p\leq \infty 
     \end{cases},
    \]
    for $|a|\leq 1$, and 
    \[
    \left\| \int W_{a,j}(x_1)G(x_1,\xb) dx_1 \right\|_{L^2_{b,\xib}} \lesssim h^{-\delta(n,p,k)} \|G\|_{L^{p'}_x} \begin{cases}
          |a|^{\frac{1}{2}+\frac{1}{p}-\frac{n-1}{2}\left(\frac{1}{2}-\frac{1}{p} \right)} & 2\leq p \leq \frac{2(n+1)}{n-1} \\ 
           |a|^{1/2} 2^{j\left( \frac{n-1}{2}-\frac{n+1}{p}\right)}  & \frac{2(n+1)}{n-1}\leq p\leq \infty 
     \end{cases}
    \]
    for $|a|\geq 1$.
     \end{proof}

\appendix
\section{Stationary Phase and Van der Corput}\label{sec:app}
In the proof of Lemma \ref{lem:W*term} we wanted to use the Stationary Phase Lemma to bound an oscillatory integral of the form
\begin{equation}\label{eqn:WantSP}
 \left| \int e^{\frac{i}{h} \left( \varphi_1(x_1,\xb,\xib)-\varphi_1(z_1,\bar{z},\xib) \right)} B(x,z,\xib) \psi_{j}(\xib) d\xib  \right|.
\end{equation}
As noted, the Stationary Phase Lemma cannot be directly applied as $\psi_{j}(\xib)$ is dependent on $h$. Many works deal with such oscillatory integrals, making various assumptions on the regularity of the phase, amplitude, dimension, etc.. We include a proof here for completeness. The proof follows closely to \cite[Proposition 1.1]{TacySP}.
\begin{proposition}\label{prop:VDC} Let 
\[
I(h,x)=\int e^{\frac{i}{h}\phi(x,\xi)}a_h(x,\xi)d\xi, \quad \xi\in\R^d
\]
where $\phi$ (independent of $h$) has a non-degenerate critical point at $\xi(x)$ such that
\[
|\det \partial^2_\xi \phi(x,\xi(x))|\gtrsim \mu^d \quad \text{and} \quad |\partial_\xi^\alpha \phi(x,\xi)|\lesssim_{\alpha} \mu,
\]
and $a_h$ smooth and compactly supported satisfying $|\partial_\xi^\alpha a_h(x,\xi)|\lesssim_\alpha f(h)^{|\alpha|}$ for any multiindex $\alpha$. Moreover, assume that $f(h)\leq h^{-\frac{1}{2}}\mu^{\frac{1}{2}}$. Then
\[
|I(h,x)|\lesssim h^{\frac{d}{2}}\mu^{-\frac{d}{2}}.
\]
\end{proposition}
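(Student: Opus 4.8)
The plan is to run the standard stationary-phase-via-Van-der-Corput argument, splitting the $\xi$-integral at the critical scale $s_c:=(h/\mu)^{1/2}$ and carefully tracking how the $h$-dependent regularity of $a_h$ feeds into the integrations by parts. After a smooth partition of unity in $\xi$, I would reduce to two cases: $a_h$ supported away from $\xi(x)$, or $a_h$ supported in a small fixed ball $B(\xi(x),c)$. In the first case $|\nabla_\xi\phi|\gtrsim\mu$ uniformly, and repeated integration by parts against $L:=\tfrac{h}{i}\,|\nabla_\xi\phi|^{-2}\,\nabla_\xi\phi\cdot\nabla_\xi$ — using $|\partial_\xi^\alpha\phi|\lesssim_\alpha\mu$ to control the transported symbols and $|\partial_\xi^\alpha a_h|\lesssim_\alpha f(h)^{|\alpha|}$ for the amplitude — produces, after $N$ steps and integrating over the (fixed, compact) support, a bound $\lesssim (h/\mu)^{N}\max\{1,f(h)^{N}\}\lesssim s_c^{\,N}$, which is negligible for $N$ large since $s_c f(h)\le 1$ by hypothesis. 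So it suffices to treat $a_h$ supported in $B(\xi(x),c)$; shrinking $c$ if needed, the non-degeneracy $|\det\partial_\xi^2\phi(x,\xi(x))|\gtrsim\mu^{d}$ together with $|\partial_\xi^\alpha\phi|\lesssim\mu$ and Taylor's theorem gives the gradient lower bound $|\nabla_\xi\phi(x,\xi)|\gtrsim\mu\,\rho$, where $\rho:=|\xi-\xi(x)|$.

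On the inner region $\rho\le s_c$ I would simply use $|a_h|\lesssim 1$ and the volume of the ball to bound the contribution by $\lesssim s_c^{\,d}=h^{d/2}\mu^{-d/2}$, which is already the claimed size.

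On the outer region $\rho\ge s_c$ I would integrate by parts $N$ times against $L$. Since $|\nabla_\xi\phi|\gtrsim\mu\rho$ and $|\partial_\xi^\alpha\phi|\lesssim\mu$, the coefficients $c_j:=|\nabla_\xi\phi|^{-2}\partial_{\xi_j}\phi$ obey $|\partial_\xi^\beta c_j|\lesssim(\mu\rho)^{-1}\rho^{-|\beta|}$, so distributing the $N$ derivatives in the Leibniz expansion of $(L^{t})^{N}a_h$, with $\ell$ of them eventually falling on $a_h$ (each trading a factor $\rho^{-1}$ for $f(h)$) and the rest on the $c_j$'s, gives
\[
\bigl|(L^{t})^{N}a_h\bigr|\;\lesssim\;\left(\frac{h}{\mu\rho^{2}}\right)^{\!N}\sum_{\ell=0}^{N}\bigl(f(h)\rho\bigr)^{\ell}\;\lesssim\;\left(\frac{h}{\mu\rho^{2}}\right)^{\!N}\bigl(1+(f(h)\rho)^{N}\bigr).
\]
I would then split the remaining $\rho$-integral at $\rho=1/f(h)$ (note $1/f(h)\ge s_c$ by hypothesis). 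On $s_c\le\rho\le 1/f(h)$ the ``$1$'' term dominates and, taking $N>d/2$,
\[
\int_{s_c}^{\infty}\rho^{d-1}\left(\frac{h}{\mu\rho^{2}}\right)^{\!N}d\rho\;\lesssim\;\left(\frac{h}{\mu}\right)^{\!N}s_c^{\,d-2N}\;=\;h^{d/2}\mu^{-d/2};
\]
on $\rho\ge 1/f(h)$ the $(f(h)\rho)^{N}$ term dominates and, taking $N>d$,
\[
\int_{1/f(h)}^{\infty}\rho^{d-1}\left(\frac{hf(h)}{\mu\rho}\right)^{\!N}d\rho\;\lesssim\;\frac{h^{N}}{\mu^{N}}\,f(h)^{2N-d}\;\le\;\frac{h^{N}}{\mu^{N}}\left(\frac{\mu}{h}\right)^{\!N-\frac{d}{2}}\;=\;h^{d/2}\mu^{-d/2},
\]
where the last inequality is exactly the hypothesis $f(h)\le h^{-1/2}\mu^{1/2}$, i.e.\ $f(h)^{2}\le\mu/h$. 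Adding the three contributions yields $|I(h,x)|\lesssim h^{d/2}\mu^{-d/2}$.

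The main obstacle I anticipate is the bookkeeping in the Leibniz expansion of $(L^{t})^{N}a_h$: one has to verify that the worst case is genuinely captured by the single geometric sum $\sum_{\ell}(f(h)\rho)^{\ell}$, i.e.\ that passing one derivative onto $a_h$ precisely replaces one factor $\rho^{-1}$ by $f(h)$ in the estimate for the transported symbol. Everything else is the routine critical-scale dichotomy, and the role of the hypothesis $f(h)\le h^{-1/2}\mu^{1/2}=s_c^{-1}$ becomes transparent: it is exactly the regularity budget under which the ($h$-dependent) cutoff $\psi_j$ of the intended application can be differentiated without spoiling the stationary-phase gain.
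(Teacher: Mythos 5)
Your plan is essentially the same as the paper's proof of Proposition \ref{prop:VDC}: Taylor-expand at the critical point, split the $\xi$-integral at the critical scale $s_c=(h/\mu)^{1/2}$, estimate the inner ball by volume, and on the outer region integrate by parts non-stationarily while tracking the $h$-dependent regularity budget $f(h)\le s_c^{-1}$.  The differences are purely in the bookkeeping, and all of your individual estimates check out (the symbol bound $|\partial^\beta c_j|\lesssim(\mu\rho)^{-1}\rho^{-|\beta|}$, the Leibniz expansion $|(L^t)^N a_h|\lesssim(h/(\mu\rho^2))^N(1+(f(h)\rho)^N)$, and the two tail integrals after the $\rho=1/f(h)$ split each evaluate to $h^{d/2}\mu^{-d/2}$).

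Two small points of comparison. First, the paper does an angular partition of unity and integrates by parts against a single coordinate vector field $\partial_{\xi_j}$, tracking the derivative count via Fa\`a di Bruno, whereas you use the full gradient field $L=\tfrac{h}{i}|\nabla_\xi\phi|^{-2}\nabla_\xi\phi\cdot\nabla_\xi$ with a Leibniz expansion; the latter avoids the angular partition but needs the slightly more careful bound on $\partial^\beta(|\nabla_\xi\phi|^{-2}\partial_j\phi)$, which you state correctly.  Second, on the outer region the paper inserts an explicit smooth cutoff $1-\chi(\rho/s_c)$, bounds all cutoff and amplitude derivatives crudely by powers of $s_c^{-1}$, and integrates $\rho^{-(N+N_1)}$ directly, while you retain the precise $\rho$-dependence of the transported amplitude and split the $\rho$-integral a second time at $\rho=1/f(h)$.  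Both work; your route makes the role of the hypothesis $f(h)\le s_c^{-1}$ slightly more visible (it is exactly what makes the second tail converge to the right size), while the paper's is a one-line cancellation $N_1+N_2+N_3=N$.

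The one thing you should be explicit about: you write ``on the outer region $\rho\ge s_c$ I would integrate by parts $N$ times,'' but integration by parts on $\{\rho\ge s_c\}$ without a smooth cutoff produces boundary terms on the sphere $\{\rho=s_c\}$.  These are in fact harmless here --- at $\rho=s_c$ one has $h/(\mu s_c^2)=1$ and $f(h)s_c\le1$, so each boundary term is $O(s_c^d)$ --- but the paper sidesteps the issue entirely by inserting the cutoff $1-\chi(h^{-1/2}\mu^{1/2}|\xi-\xi(x)|)$, whose derivatives cost $s_c^{-1}$ per order, which your regularity hypothesis exactly absorbs.  You should either insert the same cutoff or say a sentence about the boundary terms.
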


To obtain the bound on \ref{eqn:WantSP} used in Lemma \ref{lem:W*term} we employ Proposition \ref{prop:VDC} for $\phi=\varphi_1(x_1,\xb,\xib)-\varphi_1(z_1,\bar{z},\xib)$, $a_h=B(x,z,\xib) \psi_{j}(\xib)$, $d=n-1$, and $\mu=|x_1-z_1|$. Moreover note that
\[
|\partial_{\xib}^\alpha \left(B(x,z,\xib) \psi_{j}(\xib) \right)| \lesssim \left( 2^{-j}h^{-\frac{1}{k+1}} \right)^{|\alpha|}
\]
and so we are justified using this proposition for 
\[
2^{-j}h^{-\frac{1}{k+1}}\leq h^{-\frac{1}{2}}\mu^{\frac{1}{2}} \implies 2^{-2j}h^{1-\frac{2}{k+1}}\leq|x_1-z_1|.
\]
\begin{proof}
    We begin by using Taylor's Theorem with Remainder on the phase $\phi$ around the critical point $\xi(x)$:
    \begin{align*}
    \phi(x,\xi)&=\phi(x,\xi(x))+\partial_\xi \phi(x,\xi(x))(\xi-\xi(x))+\frac{1}{2}(\xi-\xi(x))^T \operatorname{Hess}\phi\big|_{\xi(x)}(\xi-\xi(x))+R_2(x,\xi,\xi(x)) \\
    &=\phi(x,\xi(x))+\frac{1}{2}(\xi-\xi(x))^T \operatorname{Hess}\phi\big|_{\xi(x)}(\xi-\xi(x))+O(\mu|\xi-\xi(x)|^2).
    \end{align*}
    Therefore
    \[
    I(h,x)=e^{\frac{i}{h}\phi(x,\xi(x))}\int e^{\frac{i}{h}\tilde{\phi}(x,\xi)}a_h(x,\xi)d\xi =: e^{\frac{i}{h}\phi(x,\xi(x))} \tilde{I}(h,x)
    \]
    where $\tilde{\phi}(x,\xi)=\phi(x,\xi)-\phi(x,\xi(x))$. Next let $\chi\in\cci(\R)$ such that
    \[
    \chi(t)=\begin{cases} 1 & |t|\leq 1 \\ 0 & |t|\geq 2 \end{cases}.
    \]
    We decompose $\tilde{I}(h,x)$ into two integrals by using $\chi$ to cutoff the piece at scale $s_c=h^{\frac{1}{2}}\mu^{-\frac{1}{2}}$ around the critical point $\xi(x)$:
    \begin{multline*}
    \tilde{I}(h,x)=\int e^{\frac{i}{h}\tilde{\phi}(x,\xi)}\chi(h^{-\frac{1}{2}}\mu^{\frac{1}{2}}|\xi-\xi(x)|)a_h(x,\xi)d\xi+\int e^{\frac{i}{h}\tilde{\phi}(x,\xi)}\left(1-\chi(h^{-\frac{1}{2}}\mu^{\frac{1}{2}}|\xi-\xi(x)|) \right)a_h(x,\xi)d\xi \\
    =: J(h,x)+K(h,x).
      \end{multline*}
      Thus, once we bound $|J|$ and $|K|$ we will be done since
      \[
      |I(h,x)|=\left|e^{\frac{i}{h}\phi(x,\xi(x))} \tilde{I}(h,x)\right| \leq |J(h,x)|+|K(h,x)|.
      \]
      We simply control $J$ using the support properties of $\chi(h^{-\frac{1}{2}}\mu^{\frac{1}{2}}|\xi-\xi(x)|)$ to obtain
      \[
      |J(h,x)|\lesssim (h^{\frac{1}{2}}\mu^{-\frac{1}{2}})^d.
      \]
      The bound on $|K|$ is a bit more involved. First we note
      \[
      \partial_{\xi}\tilde{\phi}(x,\xi)=\frac{1}{2}\operatorname{Hess}\phi\big|_{\xi(x)}(\xi-\xi(x))+\frac{1}{2}(\xi-\xi(x))^T\operatorname{Hess}\phi\big|_{\xi(x)}+O(\mu|\xi-\xi(x)|).
      \]
      Therefore, by a partition of unity argument we can assume that there is some $j$ such that
      \begin{equation}\label{eqn:xikbdd}
      \left| \partial_{\xi_j}\tilde{\phi}\right| \gtrsim \mu |\xi-\xi(x)|.
      \end{equation}
      Without loss of generality let $j=1$. Define the operator
      \[
      \mathcal{L}=\frac{h}{i \partial_{\xi_1}\tilde{\phi}} \partial_{\xi_1}
      \]
      and note
      \[
      \mathcal{L} \left( e^{\frac{i}{h}\tilde{\phi}(x,\xi)} \right) =e^{\frac{i}{h}\tilde{\phi}(x,\xi)}.
      \]
      Therefore,
      \[
      K(h,x)=\int \mathcal{L}^N \left(e^{\frac{i}{h}\tilde{\phi}(x,\xi)} \right)\left(1-\chi(h^{-\frac{1}{2}}\mu^{\frac{1}{2}}|\xi-\xi(x)|) \right)a_h(x,\xi)d\xi 
      \]
      and integrating by parts we obtain
      \[
      K(h,x)=\int e^{\frac{i}{h}\tilde{\phi}(x,\xi)} {\mathcal{L}^*}^N \left( \left(1-\chi(h^{-\frac{1}{2}}\mu^{\frac{1}{2}}|\xi-\xi(x)|) \right)a_h(x,\xi) \right) d\xi, 
      \]
      where $\mathcal{L}^*u=\frac{-h}{i}\partial_{\xi_1}\left( \frac{1}{\partial_{\xi_1}\tilde{\phi}}u \right)$. We note that 
      \[
      \left|{\mathcal{L}^*}^N \left( \left(1-\chi(h^{-\frac{1}{2}}\mu^{\frac{1}{2}}|\xi-\xi(x)|) \right)a_h(x,\xi) \right) \right|
      \]
      is composed of a sum of terms of the form
      \[
     h^N \left( \partial_{\xi_1}^{N_1} ( \partial_{\xi_1} \tilde\phi)^{-N}  \right) \left( \partial_{\xi_1}^{N_2} \left(1-\chi(h^{-\frac{1}{2}}\mu^{\frac{1}{2}}|\xi-\xi(x)|) \right)  \right)\left( \partial_{\xi_1}^{N_3} a_h(x,\xi) \right) 
      \]
      where $N_1+N_2+N_3=N$. Using that $|\partial_\xi^\alpha \phi| \lesssim \mu$, that $\tilde{\phi} $ satisfies (\ref{eqn:xikbdd}), and Fa\`{a} di Bruno's formula we can control 
      \begin{align*}
    \left|\partial_{\xi_1}^{N_1}  ( \partial_{\xi_1} \tilde\phi)^{-N}\right|  &=  \left| \sum C_{N,N_1,m_1,\dots,m_n} (\partial_{\xi_1} \tilde\phi)^{-(N+m_1+\cdots+m_{N_1})} \prod_{j=1}^{N_1} \left( \partial_{\xi_1}^j \partial_{\xi_1}\tilde{\phi} \right)^{m_j} \right| \\
    &\lesssim \sum (\mu |\xi-\xi(x)|)^{-(N+m_1+\cdots+m_{N_1})} \mu^{m_1+\cdots m_{N_1}}
      \end{align*}
      where the sum is over all $N_1$-tuples of non-negative integers $(m_1,m_2,\dots,m_{N_1})$ such that 
      \[1m_1+2m_2+\dots + N_1 m_{N_1}=N_1.\]
      Since $|\xi-\xi(x)|\geq h^{\frac{1}{2}}\mu^{-\frac{1}{2}}$, we have
      \[
      \sum (\mu |\xi-\xi(x)|)^{-(N+m_1+\cdots+m_{N_1})} \mu^{m_1+\cdots m_{N_1}}\leq \sum (\mu h)^{-\frac{N+m_1+\cdots+m_{N_1}}{2}} \mu^{m_1+\cdots m_{N_1}},
      \]
      moreover, this helps us to see that the leading order term occurs when $m_1+m_2+\dots+m_{N_1}$ is largest, i.e.\ $m_1+m_2+\dots+m_{N_1}=N_1$. Thus we will use the estimate
      \begin{align*}
       \left|\partial_{\xi_1}^{N_1}  ( \partial_{\xi_1} \tilde\phi)^{-N}\right| &\lesssim  \sum (\mu |\xi-\xi(x)|)^{-(N+m_1+\cdots+m_{N_1})} \mu^{m_1+\cdots m_{N_1}} \\ & \lesssim (\mu |\xi-\xi(x)|)^{-N-N_1} \mu^{N_1}=\mu^{-N}(|\xi-\xi(x)|^{-N-N_1}).
      \end{align*}
      Next, we note
      \[
     \left|  \partial_{\xi_1}^{N_2} \left(1-\chi(h^{-\frac{1}{2}}\mu^{\frac{1}{2}}|\xi-\xi(x)|) \right) \right| \lesssim (h^{-\frac{1}{2}}\mu^{\frac{1}{2}})^{N_2},
      \]
      and using that$|\partial_{\xi}^\alpha a_h|\lesssim f(h)^{|\alpha|}$ where $f(h)\leq h^{-\frac{1}{2}}\mu^{\frac{1}{2}}$ we can control 
      \[
     \left|  \partial_{\xi_1}^{N_3} a_h(x,\xi) \right| \lesssim f(h)^{N_3} \leq \left(h^{-\frac{1}{2}}\mu^{\frac{1}{2}}\right)^{N_3}.
      \]
      Combining, we obtain
      \begin{align*}
      |K(h,x)| &\leq \int \left|{\mathcal{L}^*}^N \left( \left(1-\chi(h^{-\frac{1}{2}}\mu^{\frac{1}{2}}|\xi-\xi(x)|) \right)a_h(x,\xi) \right) \right| d\xi \\
      &\lesssim h^N \mu^{-N} (h^{-\frac{1}{2}}\mu^{\frac{1}{2}})^{N_2+N_3}\int_{|\xi-\xi(x)|\geq h^{\frac{1}{2}}\mu^{-\frac{1}{2}}} |\xi-\xi(x)|^{-(N+N_1)}d\xi \\
      & \lesssim  h^N \mu^{-N} (h^{-\frac{1}{2}}\mu^{\frac{1}{2}})^{N_2+N_3} \left( h^{\frac{1}{2}}\mu^{-\frac{1}{2}} \right)^{-N-N_1+d} =h^{d/2}\mu^{-d/2}
      \end{align*}
      where we used that $N=N_1+N_2+N_3.$ Finally we have
      \[
      |I(h,x)|\leq |J(h,x)|+|K(h,x)|\lesssim h^{\frac{d}{2}}\mu^{-\frac{d}{2}}
      \]
      as desired.
\end{proof}

\bibliographystyle{abbrv}
\bibliography{Quasimode}
\end{document}